\renewcommand{\d}{\partial}
\newtheorem{theorem}{Theorem}[section]
\newtheorem{corollary}{Corollary}[section]
\newtheorem{assumption}{Assumption}[section]
\newtheorem{condition}{Condition}[section]
\newtheorem{proposition}{Proposition}[section]
\newtheorem{conjecture}{Conjecture}[section]
\renewcommand{\P}{\mathbb{P}}
\newcommand{\Prob}{\mathbb{P}}
\newcommand{\E}{\mathbb{E}}
\newcommand{\R}{\mathbb{R}}
\newcommand{\Pmus}{\mathcal{P}(\mu,s,\varphi)}
\newcommand{\Pmup}{\mathcal{P}(\mu,s^p)}
\newcommand{\Pmusigma}{\mathcal{P}(\mu,\sigma^2)}
\newcommand{\Pmud}{\mathcal{P}(\mu,d)}
\newcommand{\OPT}{\text{OPT}}
\newcommand{\REV}{\text{REV}}
\newcommand{\blu}[1]{\textcolor{black}{{\sf{#1}}}}
\newcommand{\blue}[1]{\textcolor{black}{#1}}
\title{Distribution-free  expectation operators for robust pricing and stocking with heavy-tailed demand}
\author{
Pieter Kleer, Johan S.H. van Leeuwaarden and Bas Verseveldt\\\
Tilburg University\\
Department of Econometrics and Operations Research\\
\texttt{\{p.s.kleer,j.s.h.vanleeuwaarden,} \\
\texttt{b.verseveldt\}@tilburguniversity.edu}
}
\date{\today}
\begin{document}

\maketitle

\begin{abstract}
We obtain distribution-free bounds for various fundamental quantities used in probability theory by solving optimization problems that search for extreme distributions among all distributions with the same mean and dispersion. These sharpest possible bounds depend only on the mean and dispersion of the driving random variable. We solve the optimization problems by a novel yet elementary technique that reduces the set of all candidate solutions to two-point distributions. We consider  a general dispersion measure, with variance, mean absolute deviation and power moments as special cases. We apply the bounds to robust newsvendor stocking and monopoly pricing, generalizing foundational mean-variance works. This shows how pricing and order decisions respond to increased demand uncertainty, including scenarios where dispersion information allows for heavy-tailed demand distributions.
\end{abstract}


\section{Introduction}
Consider a random variable of which  only the mean $\mu$ and variance $\sigma^2$ are known. In statistics and probability theory it is common practice to assume that this random variable follows a normal distribution with the same mean and variance. In some situations such normal approximations can prove poor. Take for instance the rule for normal distributions that approximately 95\% of all values fall within two standard deviations, {i.e, $2\sigma$}. This rule fails when the actual distribution would be heavy tailed, with much less of the distribution concentrated around the mean. To avoid such poor approximations, one could instead of assuming normality consider bounds and worst-case distributions. Let us mention Chebyshev's inequality as an example, which provides an upper bound on the tail probability of a random variable using only the mean and variance~\cite{bienayme1853considerations,chebyshev1867valeurs}. 
The one-sided version of Chebyshev's inequality, also known as Cantelli's inequality, says 
$\P ( X \geq t )\leq  {\sigma^2}/({\sigma^2 + \left(t - \mu\right)^2}).
$
This inequality cannot be improved, as the upper bound is attained for the distribution supported on the  points $\mu-\sigma^2/(t-\mu)$ and $t$, with probabilities $(t-\mu)^2/(\sigma^2+(t-\mu)^2)$ and $\sigma^2/(\sigma^2+(t-\mu)^2)$, respectively. 
This might be expected, as among all probability distributions with a given mean and variance, the most spread out one is a two-point distribution.

In this paper we obtain Chebyshev-type inequalities for a rich class of objectives
by solving optimization problems of the form
\begin{align}\label{richclass}
    \mbox{maximize } & \frac{\E(\psi_u(X))}{\E(\psi_\ell(X))} \\ \nonumber
    \mbox{subject to } & \E(X) = \mu \ {\rm and} \  \E(\varphi(X)) = s.
\end{align}
Here, the constraints say that the random variable
$X$ has mean $\mu$ and dispersion $s$, with $\varphi(\cdot)$ some dispersion measure that needs to be specified. 
The ratio objective in \eqref{richclass} allows for nonlinear objectives, most notably the conditional expectation. We will need some conditions on the functions $\psi_\ell$ and $\psi_u$ that are discussed in Section \ref{sec:framework}.
Observe that Cantelli's inequality is the solution to the special case of \eqref{richclass} with $\psi_u(\cdot)$ an indicator function, $\psi_\ell(\cdot)$ the constant function $1$, so that $\E(\psi_\ell(X)) = 1$, and $\varphi(\cdot )$ a quadratic function. 
We will solve \eqref{richclass} while imposing only mild conditions on the function $
\varphi(\cdot)$, so that solutions hold for a rich class of dispersion measures.

\vspace{.2cm}
\noindent{\bf Power deviation.} One dispersion measure of particular interest is the $p$th power deviation \cite{bickel2011descriptive}, defined as
$
(\E|X-\mu|^p)^{1/p}=:\tau_p(X).
$
Note that $p=2$ corresponds with standard deviation, and $p = 1$ with mean absolute deviation (MAD), two well-known dispersion measures. 
The $p$th power deviation of a fixed distribution increases in $p$, and for $p\in[1,2)$, a finite $\tau_p(X)$
does not require existence of a finite second moment $\E(X^2)$. Hence, for the range $p\in[1,2)$, the $p$th power deviation is not so much affected by large deviations from the mean, which makes it more appropriate as a dispersion measure than standard deviation $(p=2)$ when empirical data would suggest non-Gaussian features and outliers. 
{The dispersion measure $\tau_p(X)$ thus serves as a natural extension of the mean-variance setting, offering greater flexibility in shaping the ambiguity set. For $p > 2$, the set imposes restrictions leading to lighter tails, while for $p < 2$, it accommodates heavier-tailed distributions. 
Generally speaking, lowering $p$ permits more extreme outcomes and brings increased tail risk, while increasing $p$ concentrates outcomes more tightly around the mean, reducing exposure to tail events. 
Despite this added flexibility of the parameter $p$, the framework retains the simplicity of the $p = 2$ case, as it relies on just two moment constraints: the mean and a fractional moment determined by $p$. This simple structure produces extremal distributions—the solutions to the optimization problem~\eqref{richclass}—that remain two-point distributions with a mass point on either side of the mean. 
\blue{These two-point distributions also often tend to behave more extreme themselves, in the sense that either the right or left mass point moves away from the mean, when $p$ becomes smaller. This is in particular true for the choices of $\psi_u$ and $\psi_\ell$ that we consider in Section \ref{sec:fundamental_quantities}.}


One important motivation for this paper is to assess the impact of \blue{dispersion information (such as variance or fractional moments)} on robust bounds and robust decision making.   Another  motivation is to design a method for solving 
\eqref{richclass} for many combinations of the dispersion function $\varphi(\cdot)$ and the objective function.

\vspace{.2cm}
\noindent{\bf Semi-infinite fractional programs.}  Within the optimization literature, problems of type \eqref{richclass} are categorized as semi-infinite programs, as the space of measures contains infinitely many probability distributions, while there are only finitely many constraints. 

More specifically, \eqref{richclass} is a semi-infinite fractional program, as the objective function is a ratio of two functions, and hence a nonlinear function of the probability distribution \cite{ji2021data,hettich1993semi,liu2017distributionally}. There are well developed  approaches for solving such programs, for instance by first transforming (using e.g.~Charnes-Cooper transformations \cite{chen2005complex}) the non-linear function to a linear function, and then applying primal-dual methods for solving the resulting semi-infinite linear program; see \cite{shapiro2009lectures,shapiro2001duality} for general theory, and \cite{chen2022distribution,van2023generalized} for applications in pricing. 
Such semi-infinite linear programs can often be reduced to an equivalent finite program that yields the same optimal value (e.g.,~\cite{rogosinski1958moments,han2015convex}), but certainly for transformed fractional problems it cannot be expected that there are closed-form or insightful solutions. 
One of the goals in this paper is to obtain closed-form solutions by 
circumventing traditional optimization techniques altogether, and primal-dual methods in particular, and instead developing a novel reduction technique from first principles, using features of two-point distributions. 
For a class of semi-infinite linear (non-fractional) programs with mean-variance constraints, several works have considered reduction of three-point distributions to two-point distributions by assuming additional properties of the objective functions \cite{birge1991bounding,popescu2005semidefinite,van2023second}. Reduction to two-point distributions, however, is generally considered hard or impossible \cite{van2022distributionally,van2022mad,chen2022correction}. In particular, for the class of semi-infinite fractional programs \eqref{richclass} we have not seen such reductions in the literature. 

\blue{Our focus lies on fractional programs, which, due to the general dispersion measure, encompass a relevant class of models for decision-making under uncertainty. These models can be effectively solved using the reduction method introduced in this paper. Extending this class by considering additional moment constraints presents a promising avenue for future research. For such richer sets of moment information and linear (non-fractional) semi-infinite programs, Bertsimas and Popescu \cite{bertsimas2005optimal} developed duality theory to establish tight bounds on the probability that a given random variable belongs to a specified set. Building on these tools, Popescu \cite{popescu2005semidefinite} developed an optimization framework for computing tight bounds on functional expectations of random variables with general moment constraints, incorporating structural properties like symmetry and unimodality into the ambiguity sets through semidefinite and second-order conic optimization methods. While Popescu’s framework is broader in scope, as it accommodates a wider range of moment and other constraints, the key difference is that we focus on a fractional objective instead of a linear one. This fractional objective introduces complexities that can potentially be addressed by integrating Popescu’s framework with the Charnes-Cooper transformation or by extending the reduction techniques developed in this paper. We leave the exploration of this setting with more than two moment constraints as a promising direction for future research.}

\vspace{.2cm}
\noindent{\bf Our framework.} The solution method we present consists of two steps. We first formulate conditions on the dispersion and objective functions for which the set of all candidate solutions of \eqref{richclass} can be reduced to a set of two-point distributions. In the second step of our method we perform an optimization over this set of two-point distributions and solve \eqref{richclass}. 

This second step of the solution method depends in a subtle way on the specific choice of objective function. We execute this second step in full detail to obtain tight bounds for three special objective functions: the conditional expectation $\E(X|X\geq t)$, the tail probability $\P(X\geq t)$ and the max-operator (also known as expected loss function) $\E(\max\{X-t,0\})$. 
These three objectives each find many applications in the  literature, and in this paper we consider two such applications: the newsvendor model and monopoly pricing. 

\vspace{.2cm}
\noindent{\bf Applications.} We will use our distribution-free bounds to address robust versions of two well-known applications in operations research: The newsvendor model and pricing. Taking the best decision under distributional uncertainty is nowadays a large branch of operations research known as distributionally robust optimization (DRO). 
Next to inventory management \cite{Scarf1958, gallego1992minmax, perakis2008regret, ben2013robust}, DRO techniques have been applied to e.g.~scheduling \cite{kong2013scheduling,mak2014appointment}, portfolio optimization \cite{popescu2007robust,Delage2010}, insurance mathematics and option pricing 
\cite{lo1987semi,cox1991bounds},
monopoly pricing
\cite{elmachtoub2021value,chen2022distribution}, and stopping theory \cite{boshuizen1992moment,kleerleeuwaarden2022}.

The newsvendor model was introduced by Arrow et al.~\cite{arrow1951optimal} for finding the order quantity that minimizes expected costs in view of unknown demand and the trade-off between leftovers and lost sales. 
The mathematical analysis centers around the expectation $\E(\max\{0,D-q\})$ with $D$ the random demand and $q$ the order quantity. 
In traditional approaches, the demand distribution is fully specified, so that  $\E(\max\{0,D-q\})$ can be calculated, and the optimal order quantity can be determined. 
The standard reference for a distributionally robust approach is the work of Scarf \cite{Scarf1958}, who considered the newsvendor problem with mean-variance demand information, so a special case of the richer class of dispersion measures considered in this paper. Scarf obtained the tight bound on $\E(\max\{0,D-q\})$, and then solved a  minimax problem to find the optimal order quantity. We thus consider the counterpart that allows for more extreme demand scenarios, including heavy-tailed ones. 
A comparable setting with heavy-tailed demand is studied in \cite{das2021heavy}, where the ambiguity set contains all distributions with known first and the $\alpha$th moment with $\alpha>1$. For $\alpha< 2$ this allows heavy-tailed distribution with infinite second moment. There are some notable differences with \cite{das2021heavy}, who restrict to non-negative distributions, while we (just as Scarf \cite{scarf1958min}) put no further restrictions on the support. Further, \cite{das2021heavy} do not aim at solving exactly the optimization problem, presenting instead various approximative results for the optimal order quantity and how it responds to regularly varying  demand distributions. We base our analysis on exact solutions of the optimization problems for a general class of dispersion measures. In this way, we also find a new relation between the robust order quantity and the tail exponent of the worst-case demand distribution. 

The second application that we consider in more detail is from the area of pricing. We will consider the monopoly pricing problem, one of the cornerstones of revenue management. In the most basic setting, the monopolist knows the demand distribution and selects the price that strikes the right balance between margins and market share, and hence maximizes expected revenue; see e.g.~\cite{riley1983optimal} and \cite{myerson1981optimal}. We instead consider a robust version, where the monopolist only knows the mean and dispersion of demand, and then solves
the maximin problem that finds the revenue-maximizing price against the worst-case demand distribution. For the absolute revenue and mean-variance information this robust approach was pioneered by \cite{azar2013optimal}, and later generalized to other information sets \cite{carrasco2018optimal,kos2015selling,elmachtoub2021value,chen2021screening}
or objectives \cite{chen2022distribution,elmachtoub2021value}. As objective, we will consider the approximation ratio, which describes the relative difference between the robust revenue against worst-case demand and the maximum revenue that could
have been extracted in the full-information case.
Giannakopoulos et al.~\cite{giannakopoulos2023robust} recently obtained 
the robust
price that performs optimally in terms of the approximation ratio for mean-variance information. We generalize this result for variance to settings with general dispersion measures. As will become clear, solving the minimax problem for determining the optimal price will involve the tight bounds for the conditional expectation and the tail probability. 
\blue{We remark that in our robust pricing results we assume the distributions in the ambiguity set  to be supported on $\R$, but the same results remain true if we assume the distributions to have nonnegative support.}


\vspace{.2cm}
\noindent{\bf Contributions.}
Our contributions can be summarized as follows:
\begin{itemize}
\item We present a new reduction technique for solving robust optimization problems of 
the form \eqref{richclass}. This  leads to a class of optimization problems that are solved by two-point distributions. The resulting tight bounds are the sharpest possible bounds for settings where only the mean and dispersion of the underlying random variable are known. While the existing literature on comparable bounds focuses primarily on variance, we allow for a rich class of dispersion measures, include those that allow for heavy-tailed (infinite-variance) distributions. 

\item We then leverage this reduction technique to find tight bounds for three elementary quantities from probability theory: conditional expectation, tail probability and the expected loss function, generalizing existing results for mean-variance information \cite{mallows1969,scarf1958min,giannakopoulos2023robust}.
We apply these tight bounds to the newsvendor problem and monopoly pricing. In both cases, the robust bounds turn the classical settings into minimax optimization problems that can be solved explicitly. These minimax solutions provide guidance for making robust choices in response to limited information. We show how information affects the optimal order and pricing decisions, in particular when the dispersion information allows for heavy-tailed demand scenarios. 
\end{itemize}

\vspace{.2cm}
\noindent{\bf Outline.} In Section \ref{sec:framework}
we explain the key ideas behind the reduction technique. 
Section~\ref{sec:fundamental_quantities} uses the reduction technique to analyze three fundamental quantities from probability theory, leading to tight bounds for all distributions with given mean and dispersion. 
These three quantities are then applied in Section~\ref{sec:applications} for novel minimax analyses of the newsvendor model and the monopoly pricing problem. Some future directions are given in Section~\ref{conclus}.

\section{Reduction to two-point distributions}
\label{sec:framework}
We consider a so-called ambiguity set of (real-valued) distributions with given mean and dispersion, where the latter is modeled through a function $\varphi$. For $\mu, s > 0$, we define 
\begin{align}
\mathcal{P}(\mu,s,\varphi) = \{\Prob : \E_{\P}(X) = \mu, \text{ and } \E_{\P}(\varphi(X)) = s\},
    \label{eq:feasible}
\end{align}
where $X \sim \Prob$ is a real-valued random { variable} distributed according to distribution $\P$. We sometimes suppress the subscript $\Prob$ in the expectation operator $\E(\cdot)$.  For the case of dispersion based on the $p$th power deviation, we define the ambiguity set
\begin{align}
\mathcal{P}(\mu,s^p) = \{\Prob : \E_{\P}(X) = \mu \text{ and } \E_{\P}(|X-\mu|^p) = s^p\}
    \label{eq:feasible_p_power}
\end{align}
and for the special cases of $p = 1,2$, we use
\begin{align}
\mathcal{P}(\mu,\sigma^2) &= \{\Prob : \E_{\P}(X) = \mu \text{ and }  \E_{\P}((X-\mu)^2) = \sigma^2\},  \\
\mathcal{P}(\mu,d) &= \{\Prob : \E_{\P}(X) = \mu \text{ and } \E_{\P}(|X-\mu|) = d\}, 
    \label{eq:feasible_sigma}
\end{align}
which we refer to as mean-variance and mean-MAD ambiguity, respectively.

We are interested in solving the following problem
\begin{align}
    \displaystyle \max_{\Prob \in \Pmus} \frac{\E(\psi_u(X))}{\E(\psi_{\ell}(X))}
\label{eq:problem}
\end{align}
where $\psi_\ell, \psi_u : \R \rightarrow \R$. The goal of this paper is to show that  problem \eqref{eq:problem} can be solved by an optimization over two-point distributions under certain assumptions on $\psi_\ell, \psi_u$ and $\varphi$. In particular, we will need at least the following conditions.
\begin{assumption}[Conditions for $\psi_u$ and $\psi_\ell$]
\label{ass:psi}
There exists a value $t \in \R$ such that the function $\psi_u$ is 
    \begin{itemize}
     \item[{\rm (i)}] $\psi_u$ is concave and non-decreasing on $\{x : x < t\}$ and $\{x : x \geq t\}$;
          \item[{\rm (ii)}]  $\psi_l$ is convex and non-increasing on $\{x : x < t\}$ and $\{x : x \geq t\}$.
    \end{itemize}
\end{assumption}
The existence of $t$  as in Assumption \ref{ass:psi}  is known for many applications of problem \eqref{eq:problem}, as we will see in the next section.
\begin{assumption}[Conditions for $\varphi$]
\label{ass:phi}
The function $\varphi$ is
    \begin{itemize}
         \item[\rm (i)] strictly convex on $\R$;
         \item[\rm (ii)] differentiable on $\R \setminus \{\mu\}$;
         \item[\rm (iii)] 
 and satisfies $\lim_{|x| \rightarrow \infty} | {\varphi(x)}/{x}| = \infty.$
    \end{itemize}
\end{assumption}
We next make a simplifying assumption, namely that $\varphi(\mu) = 0$ and that $\varphi$ attains its global minimum at $\mu$. This is shown in Proposition \ref{prop:zero}.
\begin{proposition}
Under {\rm Assumption \ref{ass:phi}}, we can assume w.l.o.g. that $\varphi(\mu) = 0$ and that $\varphi$ attains its global minimum at $\mu$.
\label{prop:zero}
\end{proposition}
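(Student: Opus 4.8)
The plan is to exploit the fact that the objective of \eqref{eq:problem} does not involve $\varphi$ at all: the dispersion function enters only through the single constraint $\E(\varphi(X)) = s$ that defines the ambiguity set $\Pmus$. Hence it suffices to exhibit a replacement function $\tilde\varphi$ with $\tilde\varphi(\mu)=0$ and global minimum attained at $\mu$, together with a new level $\tilde s$, such that $\Pmus = \mathcal{P}(\mu,\tilde s,\tilde\varphi)$ as sets of distributions. If this holds, the two optimization problems are literally the same problem, so assuming the normalized form of $\varphi$ is without loss of generality.

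The mechanism I would use is an affine correction. Since every feasible $\P$ satisfies $\E(X) = \mu$, any term of the form $c(X-\mu)$ has zero expectation under $\P$. I would therefore set
\[
\tilde\varphi(x) = \varphi(x) - \varphi(\mu) - c(x-\mu)
\]
for a constant $c$ to be chosen, and note that for every $\P$ with $\E(X)=\mu$ we have $\E(\tilde\varphi(X)) = \E(\varphi(X)) - \varphi(\mu)$. Consequently the constraint $\E(\varphi(X)) = s$ is equivalent to $\E(\tilde\varphi(X)) = s - \varphi(\mu) =: \tilde s$, so that $\Pmus = \mathcal{P}(\mu,\tilde s,\tilde\varphi)$ for \emph{any} value of $c$, while the normalization $\tilde\varphi(\mu)=0$ holds automatically.

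It then remains to choose $c$ so that $\mu$ becomes the global minimizer of $\tilde\varphi$. Here I would invoke that a convex function has one-sided derivatives everywhere: by Assumption \ref{ass:phi} the function $\varphi$ is convex and differentiable off $\mu$, so $\varphi'_-(\mu) := \lim_{x\uparrow\mu}\varphi'(x)$ and $\varphi'_+(\mu) := \lim_{x\downarrow\mu}\varphi'(x)$ exist with $\varphi'_-(\mu)\le\varphi'_+(\mu)$, and the subdifferential of $\varphi$ at $\mu$ is the interval $[\varphi'_-(\mu),\varphi'_+(\mu)]$. Picking any $c$ in this interval yields $0\in\partial\tilde\varphi(\mu)$, which for a convex function is precisely the condition that $\mu$ is a global minimizer; strict convexity then makes this minimizer unique. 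Finally I would verify that $\tilde\varphi$ still satisfies Assumption \ref{ass:phi}: strict convexity and differentiability on $\R\setminus\{\mu\}$ are clearly preserved under subtraction of the affine term, and the growth condition survives because $\tilde\varphi(x)/x = \varphi(x)/x - c + o(1)$ as $|x|\to\infty$, which still diverges in absolute value.

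The one genuinely delicate point is the possible kink of $\varphi$ at $\mu$: because $\varphi$ need not be differentiable there, one cannot simply solve $\varphi'(\mu)=c$ to relocate the minimizer. This is exactly why I would phrase the argument through the subdifferential interval $[\varphi'_-(\mu),\varphi'_+(\mu)]$ rather than through a single stationarity equation, and the nonemptiness of this interval (guaranteed by convexity) is what makes a suitable $c$ available. Everything else is routine bookkeeping.
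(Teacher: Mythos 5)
Your proof is correct and takes essentially the same route as the paper: both constructions subtract the affine function $\varphi(\mu) + a(x-\mu)$ with $a$ chosen from the subdifferential $\partial\varphi(\mu)$, both observe that the linear part has zero expectation under any distribution with mean $\mu$ so that the constraint level merely shifts from $s$ to $s - \varphi(\mu)$, and both verify that strict convexity, differentiability off $\mu$, and the superlinear growth condition of Assumption \ref{ass:phi} are preserved. Your explicit description of the subdifferential as the interval $[\varphi'_-(\mu), \varphi'_+(\mu)]$ of one-sided derivatives just spells out the nonemptiness that the paper takes for granted when it writes ``choose $a \in \partial\varphi(\mu)$.''
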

\begin{proof}
Choose $a \in \d\varphi(\mu)$ (where $\d\varphi(\mu)$ denotes the subdifferential) and define $\ell(\mu) = -a(x - \mu) - \varphi(\mu)$. Now $\bar{\varphi} = \varphi + \ell$ satisfies $\bar{\varphi}(\mu) = 0$, and since $\ell$ is a linear function the strict convexity of $\varphi$ implies that the function $\bar{\varphi}$ is strictly convex. Since $-a \in \d\ell(\mu)$ we have $0 \in \d\bar{\varphi}(\mu)$, and since $\bar{\varphi}$ is convex we conclude that $\varphi$ attains its global minimum at $\mu$. Since $\ell$ is differentiable on $\R$ and $\varphi$ on $\R - \{\mu\}$, we find that $\bar{\varphi}$ is differentiable on $\R - \{\mu\}$. The limits of $|x| \rightarrow \infty$ follow from the fact that $\lim_{|x| \rightarrow \infty}|\ell(x)/x| = |a|$, so the given limits for $\varphi$ combined with the triangle inequality yields the desired limits for $\bar{\varphi}$. { Hence $\bar{\varphi}$ also satisfies Assumption \ref{ass:phi}. 


 Now for any random variable $X$ we have $\E(\bar{\varphi}(X)) = \E(\varphi(X)) + \E(\ell(X)) = \E(\varphi(X)) - \varphi(\mu)$, so for an optimization problem with the constraint $\E(\varphi(X)) = s$ we can define an equivalent optimization problem with instead the constraint $\E(\bar{\varphi}(X)) = s - \varphi(\mu)$, where $\bar{\varphi}$ satisfies Assumption \ref{ass:phi} but also $\bar{\varphi}(\mu) = 0$ and $\bar{\varphi}$ attains its global minimum at $\mu$. Hence we may make these additional assumptions without loss of generality. }
\end{proof}

We will sketch the reduction to two-point distributions in the coming sections.

\subsection{Two-point distributions}
A two-point distribution, represented by the tuple $(v_1,w_1,v_2,w_2)$, is a probability distribution $\Prob$ supported on two points\footnote{{ If $v_1 = \mu$ then the distribution becomes a one-point distribution with support $\{\mu\}$. Because this trivial distribution is not in the ambiguity set for the problems we analyze, it is excluded.}} $v_1 < \mu < v_2$ with probability mass $w_1$ and $w_2$, respectively. In other words, if $X \sim \Prob$, then
\begin{align}
    X = \left\{ \begin{array}{ll}
         v_1 & \text{with prob. } w_1, \\
         v_2 & \text{with prob. } w_2.
    \end{array}\right. 
\end{align}
For two-point distributions, the feasible region $\Pmus$ can be described by  the equations
\begin{equation}
\begin{split}
 &   w_1 + w_2  = 1, w_1v_1 + w_2v_2  = \mu, \text{ and } w_1\varphi(v_1) + w_2\varphi(v_2)  = s
\end{split}
\label{eq:problem_2pt}
\end{equation}
where $w_1,w_2 \geq 0$ and $v_1 < \mu < v_2$.\\

We will first prove some useful properties of the feasible region $\Pmus$. 
Substituting the first and second constraint in the third one gives
\begin{align}
    f(v_1,v_2) := \frac{v_2 - \mu}{v_2 -v_1}\varphi(v_1) + \frac{\mu-v_1}{v_2 -v_1}\,\varphi\left(v_2 \right) = s.
    \label{eq:f}
\end{align}
     
An important property  of $f(v_1,v_2)$ is that it is non-decreasing in $v_2$.

\begin{proposition}
    Under {\rm Assumption \ref{ass:phi}(i)}, the function $f(v_1,v_2)$ as in \eqref{eq:f} is strictly increasing in $v_2$, i.e., $\partial f(v_1,v_2)/\partial v_2 > 0$. Furthermore, $\lim_{v_2 \rightarrow \infty} f(v_1,v_2) = \infty$.\footnote{This should be read as that $v_1$ is fixed, while $w_1$ and $w_2$ change as $v_2$ increases, in order to make sure that the first two constraints of \eqref{eq:problem_2pt} remain satisfied.}
    \label{prop:f_incr}
\end{proposition}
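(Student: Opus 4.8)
The plan is to exploit the geometric meaning of $f(v_1,v_2)$ in \eqref{eq:f}: it is precisely the height, at the abscissa $x=\mu$, of the chord of the graph of $\varphi$ joining $(v_1,\varphi(v_1))$ and $(v_2,\varphi(v_2))$, since the coefficients $(v_2-\mu)/(v_2-v_1)$ and $(\mu-v_1)/(v_2-v_1)$ are the barycentric coordinates of $\mu$ relative to $v_1$ and $v_2$. Holding $v_1$ fixed and increasing $v_2$ rotates this chord about the fixed point $(v_1,\varphi(v_1))$, and strict convexity should force its value at the intermediate point $\mu$ to rise. I would make this rigorous analytically rather than geometrically.

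First I would differentiate. Since $v_2 > \mu$, Assumption \ref{ass:phi}(ii) guarantees $\varphi$ is differentiable at $v_2$, so $\partial f/\partial v_2$ exists. Writing $f = N/D$ with $N = (v_2-\mu)\varphi(v_1) + (\mu-v_1)\varphi(v_2)$ and $D = v_2 - v_1$ and applying the quotient rule, the $\varphi(v_1)$-terms combine and the expression factors as
\begin{equation}
\frac{\partial f}{\partial v_2} = \frac{(\mu - v_1)\big[\varphi(v_1) - \varphi(v_2) + (v_2 - v_1)\varphi'(v_2)\big]}{(v_2 - v_1)^2}.
\label{eq:prop_f_deriv}
\end{equation}
Because $v_1 < \mu$, the prefactor $(\mu - v_1)/(v_2-v_1)^2$ is strictly positive, so the sign is governed entirely by the bracket.

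The key step is to recognize the bracket as a strict-convexity inequality. Rearranging, the bracket is positive exactly when $\varphi(v_1) > \varphi(v_2) + \varphi'(v_2)(v_1 - v_2)$, whose right-hand side is the value at $v_1$ of the tangent line to $\varphi$ at $v_2$. By the supporting-tangent characterization of a strictly convex differentiable function (Assumption \ref{ass:phi}(i)), the graph lies strictly above every tangent line except at the point of tangency; as $v_1 \neq v_2$, the inequality is strict, the bracket is positive, and hence $\partial f/\partial v_2 > 0$. I expect this identification of the bracket with the tangent-line inequality to be the only real content of the argument, the rest being bookkeeping.

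Finally, for the limit I would split $f$ as $\tfrac{v_2-\mu}{v_2-v_1}\varphi(v_1) + (\mu-v_1)\tfrac{\varphi(v_2)}{v_2-v_1}$. The first term tends to the finite value $\varphi(v_1)$. For the second, by Proposition \ref{prop:zero} we may take $\varphi \ge 0$, and Assumption \ref{ass:phi}(iii) gives $\varphi(v_2)/v_2 \to \infty$ as $v_2 \to \infty$; since $(\mu - v_1) > 0$ and $v_2/(v_2-v_1) \to 1$, the second term diverges to $+\infty$, whence $f(v_1,v_2) \to \infty$.
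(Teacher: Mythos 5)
Your proof is correct and takes essentially the same approach as the paper: the identical quotient-rule computation producing the factored derivative $\frac{\mu-v_1}{(v_2-v_1)^2}\big[\varphi(v_1)-\varphi(v_2)+(v_2-v_1)\varphi'(v_2)\big]$, with the bracket disposed of by strict convexity (your supporting-tangent inequality is just the paper's secant-slope inequality $\varphi'(v_2)>\frac{\varphi(v_2)-\varphi(v_1)}{v_2-v_1}$ rearranged), followed by the same term-splitting for the limit via Assumption \ref{ass:phi}(iii). Your handling of the limit is in fact slightly cleaner, since you explicitly invoke Proposition \ref{prop:zero} for $\varphi\geq 0$ rather than the paper's loosely justified intermediate inequality, but this is a cosmetic difference, not a different route.
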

\begin{proof}
Note that
\begin{align*}
\partial f(v_1,v_2)/\partial v_2  & =  \frac{\mu - v_1}{(v_2 - v_1)^2}\varphi(v_1) - \frac{\mu - v_1}{(v_2 - v_1)^2}\varphi(v_2) + \frac{\mu - v_1}{v_2 - v_1}\varphi'(v_2) \\\
   & = \,  \frac{\mu - v_1}{(v_2 - v_1)^2}\big[\varphi(v_1) - \varphi(v_2) + (v_2 - v_1)\varphi'(v_2)\big] \\
    &= \,  \frac{\mu - v_1}{v_2 - v_1}\bigg[ \varphi'(v_2) - \frac{\varphi(v_2) - \varphi(v_1)}{v_2 - v_1}\bigg].
\end{align*}
Because $(\mu-v_1)/(v_2-v_1) > 0$, it suffices to show that 
    $$\varphi'(v_2) > (\varphi(v_2)-\varphi(v_1))/(v_2-v_1).$$
    This is true because $\varphi$ is assumed to be strictly convex. For the second claim note that
    \begin{align*}
    \lim_{v_2 \rightarrow \infty} f(v_1,v_2) &= \lim_{v_2 \rightarrow \infty} \frac{v_2 - \mu}{v_2 - v_1}\varphi(v_1) + \frac{\mu - v_1}{v_2 - v_1}\varphi(v_2)  \geq \lim_{v_2 \rightarrow \infty} \frac{v_2 - v_1}{v_2 - v_1}\varphi(v_1) + \frac{\mu - v_1}{v_2 - v_1}\varphi(v_2) \\
    & = \varphi(v_1) + \lim_{v_2 \rightarrow \infty} \frac{\mu - v_1}{v_2 - v_1}\varphi(v_2) 
     = \varphi(v_1) + (\mu - v_1)\lim_{v_2 \rightarrow \infty} \frac{v_2}{v_2 - v_1}\frac{\varphi(v_2)}{v_2} \\
    & = \varphi(v_1) + (\mu - v_1) \cdot 1 \cdot \infty 
     = \infty.
\end{align*}
This proves the claim.
\end{proof}

Next we show that, under Assumption \ref{ass:phi}, there exists a ``continuum" of two-point distributions which are feasible for \eqref{eq:problem_2pt}. We define this continuum in terms of $v_1$, but this can also be done using any of the other three parameters $w_1,v_2$ or $w_2$.

\begin{proposition}
    Under {\rm Assumption \ref{ass:psi}}, there exists for every $v \in (-\infty,\mu)$ a two-point distribution in $\Pmus$ supported on $v_1 = v$ and $v_2(v)$, with probability mass $w_1(v)$ and $w_2(v)$, respectively. 
    Furthermore, the function $v_2 : (-\infty,\mu) \rightarrow (\mu,\infty)$, mapping $v$ to $v_2(v)$ is increasing with $\lim_{v \rightarrow \mu} v_2(v) = \infty$.
    \label{prop:two_point}
\end{proposition}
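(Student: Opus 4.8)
The plan is to parametrize the feasible two-point distributions by their left atom $v_1 = v$ and to reduce the three moment constraints in \eqref{eq:problem_2pt} to the single scalar equation $f(v,v_2) = s$. First I would fix $v \in (-\infty,\mu)$ and use the first two constraints $w_1 + w_2 = 1$ and $w_1 v_1 + w_2 v_2 = \mu$ to solve for the weights, obtaining $w_1 = (v_2 - \mu)/(v_2 - v)$ and $w_2 = (\mu - v)/(v_2 - v)$, which are automatically nonnegative and sum to one whenever $v_2 > \mu$. Substituting these into the dispersion constraint yields exactly $f(v,v_2) = s$ with $f$ as in \eqref{eq:f}, so constructing a feasible distribution amounts to solving this equation for $v_2 \in (\mu,\infty)$.

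For existence and uniqueness of $v_2(v)$ I would invoke Proposition \ref{prop:f_incr}: for fixed $v$, the map $v_2 \mapsto f(v,v_2)$ is continuous (since $\varphi$ is convex, hence continuous) and strictly increasing, with $\lim_{v_2 \to \infty} f(v,v_2) = \infty$. It then remains to identify the behaviour at the left endpoint: evaluating at $v_2 = \mu$ gives $f(v,\mu) = 0 \cdot \varphi(v) + 1 \cdot \varphi(\mu) = \varphi(\mu) = 0$, using the normalization $\varphi(\mu) = 0$ from Proposition \ref{prop:zero}. Hence $v_2 \mapsto f(v,v_2)$ is a strictly increasing continuous bijection from $(\mu,\infty)$ onto $(0,\infty)$, so since $s > 0$ the intermediate value theorem produces a unique $v_2(v) > \mu$ with $f(v,v_2(v)) = s$. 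Together with the weights above this yields the claimed continuum of two-point distributions in $\Pmus$.

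To prove monotonicity of $v \mapsto v_2(v)$ I would differentiate the identity $f(v,v_2(v)) = s$ implicitly, giving $v_2'(v) = -(\partial f/\partial v_1)/(\partial f/\partial v_2)$; this is legitimate because $\partial f/\partial v_2 > 0$ by Proposition \ref{prop:f_incr} and because $\varphi$ is differentiable on $\R \setminus \{\mu\}$ (note $v_1 < \mu < v_2$). A direct computation analogous to the one carried out for $\partial f/\partial v_2$ in Proposition \ref{prop:f_incr} gives
\begin{equation*}
\frac{\partial f}{\partial v_1} = \frac{v_2 - \mu}{v_2 - v_1}\left[\varphi'(v_1) - \frac{\varphi(v_2) - \varphi(v_1)}{v_2 - v_1}\right].
\end{equation*}
Since $v_1 < \mu < v_2$ the prefactor is positive, and strict convexity of $\varphi$ forces the derivative at the left endpoint to lie strictly below the chord slope, i.e.\ $\varphi'(v_1) < (\varphi(v_2) - \varphi(v_1))/(v_2 - v_1)$, whence $\partial f/\partial v_1 < 0$. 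Combined with $\partial f/\partial v_2 > 0$ this yields $v_2'(v) > 0$, so $v_2$ is increasing.

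Finally, for the limit I would observe that, being increasing and bounded below by $\mu$, the quantity $L := \lim_{v \to \mu^-} v_2(v)$ exists in $(\mu,\infty]$. Suppose for contradiction that $L < \infty$. Passing to the limit in $f(v,v_2(v)) = s$ and using continuity of $f$ gives $f(\mu,L) = s$; but $f(\mu,L) = \varphi(\mu) = 0 \neq s$, a contradiction, so $L = \infty$. (Here I use the conditions on $\varphi$ of Assumption \ref{ass:phi} together with the normalization of Proposition \ref{prop:zero}; the reference to Assumption \ref{ass:psi} in the statement should read Assumption \ref{ass:phi}.) I expect the monotonicity step to be the main obstacle: the crux is extracting the sign of $\partial f/\partial v_1$ cleanly, and the key observation is that, just as $\partial f/\partial v_2$ compares $\varphi'(v_2)$ with the chord slope between $v_1$ and $v_2$, the derivative $\partial f/\partial v_1$ compares $\varphi'(v_1)$ with that same chord slope, with the sign pinned down by strict convexity.
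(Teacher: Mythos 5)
Your proposal is correct and follows essentially the same route as the paper's proof: reduce the three constraints to the scalar equation $f(v,v_2)=s$, apply Proposition \ref{prop:f_incr} plus the intermediate value theorem for existence and uniqueness, show $\partial f/\partial v_1<0$ via strict convexity to get monotonicity of $v_2(v)$, and then deduce the divergence as $v\to\mu$. The only cosmetic differences are that you phrase the monotonicity step as implicit differentiation and the limit step as a contradiction using continuity of $f$ (the paper argues directly that $\varphi(v_2(v))\to\infty$ is forced), and you correctly flag that the statement's reference to Assumption \ref{ass:psi} should read Assumption \ref{ass:phi}.
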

\begin{proof}
    We define $f_v(v_2) = f(v,v_2)$ with $f$ as in \eqref{eq:f}.    The goal is to show that the equation $f_v(v_2) = s$ has a solution (for every fixed $v$). The probability masses then follow as $w_2 = (\mu-v)/(v_2 - v)$ and $w_1 = 1 - w_2$. From Proposition \ref{prop:f_incr} we know that $f_v(v_2)$ is strictly increasing in $v_2$. This implies that if the equation $f_v(v_2) = s$ has a solution $v_2$, this solution is unique.
We will next argue that a solution indeed exists.  First note that if $v_2 \rightarrow \mu$, then $f_v(v_2) \rightarrow 0 < s$ by assumption. By Assumption \ref{ass:phi}(iii) it follows that if $v_2 \rightarrow \infty$, then $f_v(v_2) \rightarrow \infty$. Because $\varphi$ is continuous on $(\mu,\infty)$, it follows by the {\it intermediate value theorem} that the equation $f_v(v_2) = s$ has at least one solution.

We continue with the second part of the statement. 
    We can also show that $v_2$ is increasing in $v$. For this, we take the derivative of $f(v_1,v_2)$ with respect to $v_1$, i.e., 
\begin{align*}
  \partial f(v_1,v_2)/\partial v_1   & = \frac{v_2 - \mu}{(v_2 - v_1)^2}\varphi(v_1) + \frac{v_2 - \mu}{v_2 - v_1}\varphi'(v_1) - \frac{v_2 - \mu}{(v_2 - v_1)^2}\varphi(v_2) \\
    & = \frac{v_2 - \mu}{(v_2 - v_1)}\left[\varphi'(v_1) - \frac{\varphi(v_2) - \varphi(v_1)}{v_2-v_1} \right] < 0,
\end{align*}
 since $\varphi$ is strictly convex (meaning the part in brackets is negative). Since we already know from Proposition \ref{prop:f_incr} that $f(v_1,v_2)$  is increasing with respect to $v_2$, we find that if $v_1$ increases then $v_2$ must also increase in order to make sure that the constraint $f(v_1,v_2) = s$ remains satisfied, and vice versa.

 Finally, as $v \rightarrow \mu$, then to make sure that $f(v_1,v_2) = s$ remains satisfied, it is necessary that $\varphi(v_2(v)) \rightarrow \infty$. Because of Assumption \ref{ass:phi}, it then follows that also $v_2(v) \rightarrow \infty$.
\end{proof}

\subsection{Reduction recipe}
Having established the existence of two-point distributions in the previous section, we next describe a generic procedure that shows that in order to solve  \eqref{eq:problem}, we can perform an optimization over two-point distributions. Firstly, for a given arbitrary distribution $\Prob$ satisfying the constraints in \eqref{eq:problem}, we define $\Prob_2$ as the distribution supported on $v_1$ and $v_2$ with corresponding probabilities $w_1$ and $w_2$, 
{ where}
\begin{equation}\label{eq:w1w2v1v2}
	w_1  = \P(X < t), \ \ 
	v_1  = \E(X \mid X < t), \ \ 
    w_2  = \P(X \geq t), \ \ \text{ and } \ \  
    v_2  = \E(X \mid X \geq t).
\end{equation}
If $X \sim \Prob$ and $Y \sim \Prob_2$, then the concavity of $\psi_u$ on $\{x : x < t\}$ and $\{x : x \geq t\}$ gives $\E_{\Prob}(\psi_u(X)) \leq \E_{\Prob_2}(\psi_u(Y))$, and similarly, $\E_{\Prob}(\psi_{\ell}(X)) \geq \E_{\Prob_2}(\psi_{\ell}(Y))$ by the convexity of $\psi_{\ell}$. These claims follow from an application of Jensen's inequality. This implies that
$$
\frac{\E(\psi_u(X))}{\E(\psi_{\ell}(X))} \leq \frac{\E(\psi_u(Y))}{\E(\psi_{\ell}(Y))}
$$
so the objective function of \eqref{eq:problem} does not decrease when replacing $\Prob$ by $\Prob_2$. 

Furthermore, by the convexity of $\varphi$, it follows that $\Prob_2$ satisfies the dispersion constraint with inequality, i.e., 
\begin{align}
\E_{\Prob_2}(\varphi(Y)) = f(v_1,v_2) \leq s,   
\label{eq:f2}
\end{align}
where $f(v_1,v_2)$ is as defined in \eqref{eq:f}. This also is because of Jensen's inequality. To summarize the reduction so far, in order to solve \eqref{eq:problem}, it suffices to solve
\begin{equation}
\begin{split}
\max &\, \frac{w_1\psi_u(v_1) + w_2 \psi_u(v_2)}{w_1\psi_{\ell}(v_1) + w_2 \psi_{\ell}(v_2)} \\
 \mbox{s.t. } &   w_1 + w_2  = 1, w_1v_1 + w_2v_2  = \mu, \text{ and } w_1\varphi(v_1) + w_2\varphi(v_2)   \leq s.
\end{split}
\label{eq:problem_2pt_full}
\end{equation}
In many cases, depending on the functions $\psi_u$ and $\psi_\ell$, it turns out that problem \eqref{eq:problem_2pt_full} has the same solution as the problem in which we set the dispersion constraint to hold with equality again. We will describe a sufficient condition for this that is satisfied by all the applications that we consider in Sections \ref{sec:fundamental_quantities} and \ref{sec:applications}.

From Proposition \ref{prop:f_incr} we know that $f$ is increasing in $v_2$, for fixed $v_1$. Hence if $v_1$ stays the same and $v_2$ increases, then the left hand side of the dispersion constraint in \eqref{eq:problem_2pt_full} increases. Hence, we could just increase $v_2$ until (\ref{eq:f2}) becomes an equality. However when doing this, we have to make sure that the objective function increases as well, which is a more subtle matter as $w_1$ and $w_2$ change as $v_2$ increases (while $v_1$ is being kept fixed). In particular we want the following condition to be satisfied. This condition is somewhat technical, but satisfied by many applications in the literature that we discuss in Section \ref{sec:applications}.

\begin{condition}
For every fixed $v_1 \in \{x : x < \mu\}$, the function
\begin{align}
g(v_1,v_2) = \frac{w_1(v_2)\psi_u(v_1) + w_2(v_2) \psi_u(v_2)}{w_1(v_2)\psi_{\ell}(v_1) + w_2(v_2) \psi_{\ell}(v_2)}
    \label{eq:cond_objective}
\end{align}
is non-decreasing in $v_2$, where $w_2(v_2) = (\mu - v_1)/(v_2 - v_1)$ and $w_1(v_2) = 1 - w_2(v_2)$.
\label{cond:objective}
\end{condition}
We can now summarize the whole reduction recipe in the following theorem.

\begin{theorem}
Assume that $\psi_\ell, \psi_u$ and $\varphi$ are such that {\rm Assumptions \ref{ass:psi}} and {\rm \ref{ass:phi}}, and {\rm Condition \ref{cond:objective}}, are satisfied. Then, solving \eqref{eq:problem} reduces to solving
\begin{equation}
\begin{split}
\displaystyle\max_{w_1,w_2 \geq 0, v_1 < \mu < v_2} \  & \frac{w_1\psi_u(v_1) + w_2 \psi_u(v_2)}{w_1\psi_{\ell}(v_1) + w_2 \psi_{\ell}(v_2)} \\
 \mathrm{s.t. \ \ \ \ \ \ \ } &   w_1 + w_2  = 1, \, w_1v_1 + w_2v_2  = \mu, \text{ and } w_1\varphi(v_1) + w_2\varphi(v_2)  = s.
\end{split}
\label{eq:problem_2pt_full_equal}
\end{equation}
    \label{thm:reduction_2pt}
\end{theorem}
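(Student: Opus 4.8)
The plan is to assemble the pieces already established in the reduction recipe preceding the statement and then close the one remaining gap: upgrading the inequality dispersion constraint in \eqref{eq:problem_2pt_full} to the equality constraint in \eqref{eq:problem_2pt_full_equal}. I would organize everything around three optimal values — those of the original program \eqref{eq:problem}, of the relaxed two-point program \eqref{eq:problem_2pt_full}, and of the target program \eqref{eq:problem_2pt_full_equal} — and show that they all coincide by a cyclic chain of inequalities.

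First I would record the two easy comparisons. Every feasible point of \eqref{eq:problem_2pt_full_equal} is a genuine two-point distribution with $\E(\varphi(X)) = s$, hence lies in $\Pmus$ and is feasible for \eqref{eq:problem}; therefore the optimal value of \eqref{eq:problem_2pt_full_equal} is at most that of \eqref{eq:problem}. Conversely, the recipe developed above shows that for an arbitrary $\Prob \in \Pmus$ the associated two-point distribution $\Prob_2$ built from the parameters \eqref{eq:w1w2v1v2} has objective value at least as large (by the two Jensen inequalities coming from the concavity of $\psi_u$ and convexity of $\psi_\ell$) and satisfies $f(v_1,v_2) \leq s$ by \eqref{eq:f2}. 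Taking suprema, the optimal value of \eqref{eq:problem} is at most that of the relaxed program \eqref{eq:problem_2pt_full}. So the content of the theorem reduces to showing that the optimal value of \eqref{eq:problem_2pt_full} is in turn at most that of the equality-constrained program \eqref{eq:problem_2pt_full_equal}.

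For this central step I would take any feasible $(w_1,w_2,v_1,v_2)$ of \eqref{eq:problem_2pt_full}, so $v_1 < \mu < v_2$ with $f(v_1,v_2) \leq s$, keep $v_1$ fixed, and increase $v_2$. By Proposition \ref{prop:f_incr} the map $f(v_1,\cdot)$ is strictly increasing, continuous on $(\mu,\infty)$, and tends to $\infty$; hence by the intermediate value theorem there is a unique $v_2^\star \geq v_2$ with $f(v_1,v_2^\star) = s$, which is exactly the feasible continuum of Proposition \ref{prop:two_point}. Setting $w_2^\star = (\mu - v_1)/(v_2^\star - v_1)$ and $w_1^\star = 1 - w_2^\star$ preserves the mass and mean constraints and yields a point feasible for \eqref{eq:problem_2pt_full_equal}. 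Crucially, along this move the objective is precisely the function $g(v_1,v_2)$ of \eqref{eq:cond_objective}, which Condition \ref{cond:objective} guarantees is non-decreasing in $v_2$, so $g(v_1,v_2^\star) \geq g(v_1,v_2)$ and the objective does not decrease. Taking suprema gives the desired inequality, and chaining the three comparisons forces all three optimal values to be equal, which is the assertion of the theorem.

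The main obstacle is exactly this central step, and it has two halves that must both go through: the \emph{existence} of a value $v_2^\star$ restoring equality in the dispersion constraint (supplied by the strict monotonicity and blow-up of $f$ in Proposition \ref{prop:f_incr}), and the fact that moving to it does \emph{not} cost objective value (supplied precisely by Condition \ref{cond:objective}). Throughout, the directions of the Jensen inequalities in the recipe and of the monotonicities must be tracked carefully so that each relaxation points the right way. A minor point I would also address is that the candidate $\Prob_2$ from the recipe genuinely satisfies $v_1 < \mu < v_2$: since $v_1 = \E(X \mid X < t) < t \leq \E(X \mid X \geq t) = v_2$ and the mean of a nondegenerate two-point distribution lies strictly between its support points, the parametrization used throughout is valid, the degenerate one-point case being excluded because it cannot belong to $\Pmus$ for $s > 0$.
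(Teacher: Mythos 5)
Your proposal is correct and follows essentially the same route as the paper: the paper's ``proof'' of Theorem \ref{thm:reduction_2pt} is precisely its preceding reduction recipe, namely the Jensen-based collapse of an arbitrary $\Prob \in \Pmus$ to the two-point distribution \eqref{eq:w1w2v1v2} satisfying the relaxed constraint \eqref{eq:f2}, followed by increasing $v_2$ (using Proposition \ref{prop:f_incr} and Condition \ref{cond:objective}) to restore equality in the dispersion constraint. Your only additions --- phrasing the argument as a cyclic chain of inequalities among the three optimal values, invoking the intermediate value theorem explicitly for the existence of $v_2^\star$, and checking $v_1 < \mu < v_2$ for the constructed $\Prob_2$ --- are careful write-ups of steps the paper leaves implicit, not a different method.
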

\subsection{Mean absolute deviation}
The third condition in Assumption \ref{ass:phi} says that $\varphi$ should grow in a superlinear fashion as (the absolute value of) $x$ grows large. For mean absolute deviation (MAD) this third condition is violated. We will therefore treat MAD separately in this paper, also because of 
the important role of MAD in robust optimization studies that allow for heavy-tailed distributions, see e.g.~\cite{bental1972mad,roos2019chebyshev,elmachtoub2021value,chen2021screening}.
Since Assumption \ref{ass:phi} does not hold for MAD, we cannot use Propositions \ref{prop:f_incr} and \ref{prop:two_point} directly. Instead, we have the following result.
\begin{proposition}\label{prop:two_point_mad}
    For $\varphi(x) = |x - \mu|$, there exists for any $v \in (-\infty, \mu - d/2)$ a two-point distribution in $\Pmud$ supported on $v_1 = v$ and $v_2(v)$, with probability mass $w_1(v)$ and $w_2(v)$, respectively. Furthermore the function $v_2 : (-\infty,\mu - d/2) \rightarrow (\mu + d/2, \infty)$ mapping $v$ to $v_2(v)$ is increasing with $\lim_{v \rightarrow \mu - d/2} v_2(v) = \infty$. 
\end{proposition}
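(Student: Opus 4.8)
The plan is to exploit a structural simplification that is special to MAD, so that (unlike Proposition~\ref{prop:two_point}) we never need the intermediate value theorem or Assumption~\ref{ass:phi}(iii); for MAD the dispersion constraint on two-point distributions can be solved in closed form. The key observation I would record first is a \emph{balance identity}: for any two-point distribution supported on $v_1 < \mu < v_2$, combining $w_1 + w_2 = 1$ with the mean constraint $w_1 v_1 + w_2 v_2 = \mu$ rearranges to $w_1(\mu - v_1) = w_2(v_2 - \mu)$. Since $\varphi(x) = |x-\mu|$ and $v_1 < \mu < v_2$, the dispersion constraint reads $w_1(\mu - v_1) + w_2(v_2 - \mu) = d$, and substituting the balance identity gives $w_1(\mu - v_1) = w_2(v_2 - \mu) = d/2$. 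This decouples the two support points.

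From this identity I would read off the explicit masses $w_1 = \tfrac{d/2}{\mu - v_1}$ and $w_2 = \tfrac{d/2}{v_2 - \mu}$, so that the normalization $w_1 + w_2 = 1$ becomes a single equation relating $v_1$ and $v_2$ that I can solve for $v_2$ when $v_1 = v$ is fixed, yielding
\[
v_2(v) = \mu + \frac{d(\mu - v)}{2(\mu - v) - d}.
\]
To check feasibility I would verify $w_1 \in (0,1)$, which is equivalent to $\mu - v > d/2$, i.e.\ $v < \mu - d/2$; this is exactly the stated domain, and it simultaneously makes $2(\mu - v) - d > 0$ so that $v_2(v) > \mu$. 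The second mass $w_2 = 1 - w_1$ then lies automatically in $(0,1)$, giving a genuine two-point distribution in $\Pmud$.

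For the monotonicity and limit claims I would substitute $u = \mu - v > d/2$, so that $v_2 = \mu + \tfrac{du}{2u - d}$; differentiating gives $\tfrac{d}{du}\big(\tfrac{du}{2u-d}\big) = \tfrac{-d^2}{(2u-d)^2} < 0$, hence $v_2$ is decreasing in $u$ and therefore increasing in $v$. Taking limits, $v \to \mu - d/2$ forces $2(\mu - v) - d \to 0^+$ and hence $v_2(v) \to \infty$, while $v \to -\infty$ gives $w_1 \to 0$ and $v_2(v) \to \mu + d/2$; continuity together with monotonicity then identifies the image as $(\mu + d/2, \infty)$. I do not expect a genuine obstacle here: once the balance identity is recognized, everything is elementary calculus. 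The one point worth emphasizing is that the behaviour differs qualitatively from the general case, in that the image of $v_2$ is bounded below by $\mu + d/2$ rather than by $\mu$, which is precisely the footprint of MAD failing the superlinear growth in Assumption~\ref{ass:phi}(iii).
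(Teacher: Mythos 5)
Your proposal is correct and follows essentially the same route as the paper's proof: both solve the MAD system in closed form (your formula $v_2(v) = \mu + \tfrac{d(\mu-v)}{2(\mu-v)-d}$ is algebraically identical to the paper's $\mu + d/2 + \tfrac{d^2}{4\mu - 4v - 2d}$), both obtain the domain restriction $v < \mu - d/2$ from nonnegativity of the masses, and both finish with the monotonicity and limit checks. Your ``balance identity'' $w_1(\mu - v_1) = w_2(v_2 - \mu) = d/2$ is a tidy way of organizing the elimination, and you spell out the calculus that the paper leaves as ``easily verified,'' but these are presentational differences only.
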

\begin{proof} For $\varphi(x) = |x - \mu|$, (\ref{eq:problem_2pt}) becomes
\begin{equation}
\begin{split}
 &   w_1 + w_2  = 1, \, w_1v_1 + w_2v_2  = \mu\, \text{ and } w_1(\mu - v_1) + w_2(v_2 - \mu)  = d
\end{split}
\end{equation}
where $w_1, w_2 \geq 0$ and $v_1 < \mu < v_2$. This can be solved exactly, with solution
\begin{equation}\label{eq:madsolution}
    w_1(v_1) = \frac{d}{2(\mu - v_1)}, \quad w_2(v_1) = 1 - \frac{d}{2(\mu - v_1)}, \quad v_2(v_1) = \mu + d/2 + \frac{d^2}{4\mu - 4v_1 - 2d}.
\end{equation}
Since $w_2 \geq 0$ we find $1 - \frac{s}{d(\mu - v_1)} \geq 0$, which can be rewritten as $v_1 \leq \mu - d/2$. Note that $v_1 = \mu - d/2$ would leave the system with no solution, hence we require $v_1 < \mu - d/2$. It is clear that for any $v_1 < \mu - d/2$ the constraints that $w_1, w_2 \geq 0$ are satisfied and that $v_2(v_1)$ is defined. Hence there exists a two-point distribution in $\Pmud$ supported on $v$ and $v_2(v)$ for any $v < \mu - d/2$. It is now easily verified that $v_2(v) > \mu + d/2$, that $v_2(v)$ increases in $v$ and that $\lim_{v \rightarrow \mu - d/2} v_2(v) = \infty$. 
\end{proof}
Note that Proposition~\ref{prop:two_point_mad} is very similar to Proposition \ref{prop:two_point}, but several key values are different, showing that MAD behaves differently than functions $\varphi$ that do satisfy Assumption \ref{ass:phi}. This suggests that first considering the $p$th power deviation with $p > 1$ and then taking the limit $p \rightarrow 1$ 
{ may return incorrect} results for MAD ($p=1$), and hence a separate treatment as in this section is necessary. 

Next, we would like to find a result similar to Theorem \ref{thm:reduction_2pt}. However, Condition \ref{cond:objective} is not sufficient in this case, since it may happen that $v_1 \geq \mu - d/2$. In that case, it is not possible to fix $v_1$ and increase $v_2$ until the constraint $w_1v_1 + w_2v_2 = d$ is satisfied. So we need a stronger condition that also allows for decreasing $v_1$ in order to satisfy this constraint.
\begin{condition}\label{cond:mad_condition}
    For every fixed $v_1 \in (-\infty,\mu-d/2)$, the function (\ref{eq:cond_objective}) is non-decreasing in $v_2$, where $w_2(v_2) = (\mu - v_1)/(v_2 - v_1)$ and $w_1(v_2) = 1 - w_2(v_2)$. Furthermore, for every fixed $v_2 \in (\mu+d/2,\infty)$, the function (\ref{eq:cond_objective}), is non-increasing in $v_1$, where again $w_2(v_2) = (\mu - v_1)/(v_2 - v_1)$ and $w_1(v_2) = 1 - w_2(v_2)$.
\end{condition}
We can now first decrease $v_1$ to a value lower than $\mu - d/2$, and then increase $v_2$ until we reach $w_1v_1 + w_2v_2 = d$. Using the same proof technique as for Theorem \ref{thm:reduction_2pt}, we find the following result.
\begin{theorem}
Assume that $\psi_\ell, \psi_u$ are such that {\rm Assumptions \ref{ass:psi}} and {\rm Condition \ref{cond:mad_condition}}, are satisfied. Then, solving \eqref{eq:problem} with $\varphi(x) = |x - \mu|$ reduces to solving
\begin{equation}
\begin{split}
\displaystyle\max_{w_1,w_2 \geq 0, v_1 < \mu-d/2, \mu+d/2 < v_2} \  & \frac{w_1\psi_u(v_1) + w_2 \psi_u(v_2)}{w_1\psi_{\ell}(v_1) + w_2 \psi_{\ell}(v_2)} \\
 \mathrm{s.t. \ \ \ \ \ \ \ } &   w_1 + w_2  = 1, \, w_1v_1 + w_2v_2  = \mu, \, \text{ and }w_1(\mu - v_1) + w_2(v_2 - \mu)  = d.
\end{split}
\label{eq:problem_2pt_full_equal_mad}
\end{equation}
    \label{thm:reduction_2pt_mad}
\end{theorem}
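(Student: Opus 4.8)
The plan is to follow the two-stage recipe of Theorem \ref{thm:reduction_2pt} and to adapt only the second stage to the special geometry of MAD. In the first stage I would take an arbitrary non-degenerate $\Prob \in \Pmud$ and pass to the two-point distribution $\Prob_2$ with atoms $v_1 = \E(X \mid X < t)$, $v_2 = \E(X \mid X \ge t)$ and masses $w_1 = \P(X < t)$, $w_2 = \P(X \ge t)$, exactly as in \eqref{eq:w1w2v1v2}. This stage is verbatim the general argument and uses only Assumption \ref{ass:psi}: Jensen's inequality applied to the piecewise-concave $\psi_u$ and piecewise-convex $\psi_\ell$ shows the ratio objective does not decrease, and Jensen applied to the convex map $x \mapsto |x-\mu|$ shows that $\Prob_2$ meets the dispersion constraint with inequality, $w_1(\mu - v_1) + w_2(v_2 - \mu) \le d$. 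It then remains to move $\Prob_2$ to a two-point distribution satisfying the constraint with equality without decreasing the objective, which is the content of \eqref{eq:problem_2pt_full_equal_mad}.

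The second stage is where MAD departs from Assumption \ref{ass:phi}. For a centred two-point distribution the MAD equals $2(v_2-\mu)(\mu-v_1)/(v_2-v_1)$, which, in contrast to the superlinear case of Proposition \ref{prop:f_incr}, is increasing in $v_2$ for fixed $v_1$ only up to the \emph{finite} limit $2(\mu-v_1)$, and symmetrically increasing as $v_1$ decreases for fixed $v_2$ only up to $2(v_2-\mu)$. Hence increasing $v_2$ alone can raise the MAD to $d$ precisely when $v_1 < \mu - d/2$, and decreasing $v_1$ alone can do so precisely when $v_2 > \mu + d/2$; by Proposition \ref{prop:two_point_mad} the equality curve is exactly the set with $v_1 < \mu - d/2$. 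My plan for this stage is a dichotomy: if $v_1 < \mu - d/2$, fix $v_1$ and increase $v_2$ to the unique value where the MAD equals $d$, using the first part of Condition \ref{cond:mad_condition} to keep the objective non-decreasing; otherwise fix $v_2$ and decrease $v_1$ below $\mu - d/2$ until the MAD equals $d$, using the second part of Condition \ref{cond:mad_condition}, which is licensed only when $v_2 > \mu + d/2$.

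The main obstacle is the admissibility of this dichotomy: the second move is valid only if $v_2 > \mu + d/2$, so I must exclude the ``doubly interior'' configuration in which the reduced point has simultaneously $v_1 \ge \mu - d/2$ and $v_2 \le \mu + d/2$, where neither single-coordinate move is permitted. I would rule this out by a deviation-balance argument. By centring, the positive and negative halves of $\E|X-\mu|$ each equal $d/2$, and exactly one conditional block straddles $\mu$: the block $\{X<t\}$ when $t \ge \mu$, and $\{X \ge t\}$ when $t < \mu$, so on the other block Jensen is tight and relates the reduced atom to a deviation mass. Suppose, for $t \ge \mu$, that $v_2 \le \mu + d/2$; since $v_2 \ge t$ this forces $t - \mu \le d/2$, so the values in $\{\mu \le X < t\}$ have deviation strictly below $d/2$, whence the positive deviation $\E((X-\mu)\mathbf{1}[\mu \le X < t])$ carried inside $\{X<t\}$ is strictly less than $\tfrac{d}{2}\,\P(X<t)$. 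Subtracting from the total positive deviation $d/2$ then yields $w_2(v_2-\mu) > \tfrac{d}{2}w_2$, i.e. $v_2 > \mu + d/2$, a contradiction; the case $t < \mu$ is symmetric and forces $v_1 < \mu - d/2$. Thus the doubly interior configuration never arises, and in every case at least one admissible move of the dichotomy applies.

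With this the argument closes as for Theorem \ref{thm:reduction_2pt}: every feasible distribution has been mapped, through the Jensen reduction followed by one monotone move along the curve of Proposition \ref{prop:two_point_mad}, to a two-point distribution in $\Pmud$ that satisfies all three constraints of \eqref{eq:problem_2pt_full_equal_mad} with equality and has objective value at least as large, so solving \eqref{eq:problem} with $\varphi(x)=|x-\mu|$ reduces to \eqref{eq:problem_2pt_full_equal_mad}. Two minor points I would check explicitly are the exclusion of the degenerate one-point case, so that $v_1 < \mu < v_2$ genuinely holds (cf.\ the footnote and the solution \eqref{eq:madsolution}), and the boundary instances where one of the inequalities above is tight; these correspond to the limits $v_2 \to \mu + d/2$ or $v_1 \to \mu - d/2$ and are handled by continuity.
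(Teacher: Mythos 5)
Your proof is correct and follows the same overall route as the paper's own (very brief) argument: a Jensen reduction onto the two atoms in \eqref{eq:w1w2v1v2}, which satisfies the MAD constraint with inequality, followed by monotone moves licensed by Condition \ref{cond:mad_condition} to restore equality. What you do differently is to identify and close a gap that the paper's two-sentence proof glosses over. The paper asserts that one can ``first decrease $v_1$ to a value below $\mu-d/2$, then increase $v_2$'', but the second half of Condition \ref{cond:mad_condition} only licenses decreasing $v_1$ when $v_2>\mu+d/2$, and the paper never verifies that the Jensen-reduced pair satisfies $v_1<\mu-d/2$ or $v_2>\mu+d/2$. Your deviation-balance argument---using $\E(X-\mu)^+=\E(\mu-X)^+=d/2$ and the fact that exactly one block of the split at $t$ straddles $\mu$, so that Jensen is tight on the other block---rules out the ``doubly interior'' configuration and makes your dichotomy exhaustive; this is a genuine addition to what the paper writes down, within the same strategy. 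The one loose end, which you correctly flag rather than resolve, is the degenerate case in which one block carries zero mass and the reduction collapses to a point mass at $\mu$ (e.g.\ $t>\mu+d/2$ and $\P(X\geq t)=0$); the paper is equally silent here, beyond its footnote excluding one-point distributions. It can be closed by splitting such a distribution at $\mu$ instead of $t$: both conditional means then lie in a single concavity/convexity region of $\psi_u$ and $\psi_\ell$, so the blockwise Jensen step still applies, and a split at the mean preserves the MAD exactly, landing directly on the equality curve of Proposition \ref{prop:two_point_mad}.
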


\section{Fundamental quantities}
\label{sec:fundamental_quantities}
In this section we analyze three fundamental quantities in probability theory involving a random variable $X$, that fit in the framework described in Section \ref{sec:framework}: The conditional expectation, the tail bound and, the so-called (translated) max-operator. Applications building on these quantities will be given in Section \ref{sec:applications}.

\subsection{Conditional expectation}
\label{sec:app_cond}
Consider the conditional expectation $\E(\psi(X) \mid X \geq t)$, where $\psi$ is a concave, increasing function. 
{ If $t \geq \mu$, we can take a sequence of two-point distributions with $v_1 \uparrow \mu$, so that $v_2 \uparrow \infty$. For this sequence, the objective becomes $\lim_{v_2 \rightarrow \infty} \E(\psi(X) \mid X \geq t) = \lim_{v_2 \rightarrow \infty} \psi(v_2)$. It is obvious that this limit cannot be improved upon and hence this is the desired maximum or supremum. }
Therefore, we focus on the case $t < \mu$. In Theorem \ref{thm:cond}, the main result of this section is stated.

\begin{theorem}
Let $\psi$ be a concave, increasing function, and let $t < \mu$. Then
    \begin{equation}\label{eq:v2psiconditional}
        \max_{\P \in \Pmus}\E(\psi(X) \mid X \geq t) = 
        {\color{blue} \psi(v_2)}
    \end{equation}
   with $v_2$ the solution to
    \begin{equation}
        \frac{v_2 - \mu}{v_2 - t}\varphi(t) + \frac{\mu - t}{v_2 - t}\varphi(v_2) = s.
        \label{eq:v2_conditional}
    \end{equation}
    The distribution solving the above problem is the two-point distribution with $v_1 \uparrow t$.
    \label{thm:cond}
\end{theorem}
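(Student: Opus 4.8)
The plan is to recognize this statement as an instance of the ratio problem \eqref{eq:problem} and feed it through the reduction recipe of Section \ref{sec:framework}. First I would write
\[
\E(\psi(X)\mid X \geq t) = \frac{\E(\psi(X)\,\mathbf{1}\{X \geq t\})}{\P(X \geq t)},
\]
so that the objective has the form $\E(\psi_u(X))/\E(\psi_\ell(X))$ with $\psi_u(x) = \psi(x)\,\mathbf{1}\{x \geq t\}$ and $\psi_\ell(x) = \mathbf{1}\{x \geq t\}$. The initial step is to verify Assumption \ref{ass:psi} for exactly this value of $t$: on $\{x < t\}$ both functions are constant ($\psi_u \equiv 0$, $\psi_\ell \equiv 0$) and hence trivially concave/non-decreasing and convex/non-increasing, while on $\{x \geq t\}$ we have $\psi_u = \psi$ (concave and increasing by hypothesis) and $\psi_\ell \equiv 1$ (convex and non-increasing).

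Next I would invoke the reduction recipe. For an arbitrary $\P \in \Pmus$ form the two-point distribution $\P_2$ supported on $v_1 = \E(X\mid X<t)$ and $v_2 = \E(X\mid X \geq t)$ with masses $w_1 = \P(X<t)$ and $w_2 = \P(X \geq t)$. The crucial observation is that $v_1 = \E(X\mid X<t) < t$, so that $\psi_u(v_1) = \psi_\ell(v_1) = 0$; the mass at $v_1$ drops out of both numerator and denominator, and the reduced objective collapses to $w_2\psi(v_2)/w_2 = \psi(v_2)$. By the Jensen argument of the recipe the objective does not decrease under this replacement and the dispersion constraint weakens to $f(v_1,v_2)\leq s$. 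Note that because these reduced distributions always have $v_1 < t$, Condition \ref{cond:objective} holds here for a trivial reason ($\psi(v_2)$ is increasing in $v_2$), so I sidestep the regime $v_1 \geq t$ in which that condition could fail.

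It then remains to maximize $\psi(v_2)$ over two-point distributions with $v_1 < t < \mu < v_2$, mean $\mu$, and $f(v_1,v_2)\leq s$. Since $f$ is strictly increasing in $v_2$ (Proposition \ref{prop:f_incr}), I may push $v_2$ up until the dispersion constraint binds, so we can assume $f(v_1,v_2)=s$; this only raises the objective since $\psi$ is increasing. On this level set, the signs $\partial f/\partial v_1 < 0$ and $\partial f/\partial v_2 > 0$ from the proofs of Propositions \ref{prop:f_incr} and \ref{prop:two_point} show that $v_2 = v_2(v_1)$ is increasing in $v_1$. Hence maximizing $v_2$ (equivalently $\psi(v_2)$) means driving $v_1$ to the top of its admissible range, i.e. $v_1 \uparrow t$, and the resulting $v_2$ is the limit $v_2(t)$ characterized by $f(t,v_2)=s$, which is precisely \eqref{eq:v2_conditional}.

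The main subtlety I anticipate concerns the boundary at $v_1 = t$. Every feasible two-point distribution needs $v_1 < t$ strictly: if instead $v_1 = t$, the mass there lies inside the conditioning event $\{X \geq t\}$ and the objective would read $w_1\psi(t)+w_2\psi(v_2) < \psi(v_2)$. Consequently $\psi(v_2(t))$ is a supremum approached along the sequence $v_1 \uparrow t$ rather than a maximum attained at an interior point, which is exactly the meaning of the phrase ``$v_1 \uparrow t$'' in the statement. To finish, I would confirm via Proposition \ref{prop:two_point} applied at $v = t \in (-\infty,\mu)$ that a finite, unique solution $v_2(t)$ of \eqref{eq:v2_conditional} exists, using that $f(t,v_2)\to 0 < s$ as $v_2 \downarrow \mu$ (since $\varphi(\mu)=0$) and $f(t,v_2)\to\infty$ as $v_2\to\infty$, together with the intermediate value theorem.
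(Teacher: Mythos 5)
Your proposal is correct and follows essentially the same route as the paper's proof: the same choice of $\psi_u$ and $\psi_\ell$, verification of Assumption \ref{ass:psi}, restriction of Condition \ref{cond:objective} to the regime $v_1 < t$ (where the two-point objective is $\psi(v_2)$, increasing in $v_2$), and the final step of driving $v_1 \uparrow t$ via the monotonicity of $v_2(v_1)$ to arrive at the defining equation \eqref{eq:v2_conditional}. The only differences are cosmetic: you justify ignoring the regime $v_1 \geq t$ by noting that the reduction always produces $v_1 = \E(X \mid X < t) < t$, whereas the paper bounds that regime by $\psi(\mu)$ via Jensen (which also covers the degenerate case $\P(X < t) = 0$ that your phrasing leaves implicit), and you make explicit that $\psi(v_2)$ is a supremum attained only in the limit $v_1 \uparrow t$, a point the paper states without comment.
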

\begin{proof}
Notice that
$$
\E(\psi(X) | X \geq t) = \frac{1}{\Prob(X \geq t) } \int_{t}^\infty \psi(x) d\Prob(x) = \frac{\E(\psi_u(X))}{\E(\psi_{\ell}(X))} 
$$
for $\psi_{\ell}(x) = \mathbb{I}_{x \geq t}$ and 
$$
    \psi_u(x) = \left\{\begin{array}{ll}
        0  & \text{ if } x < t, \\
        \psi(x) &  \text{ if } x \geq t.
    \end{array}\right. 
    $$
    Note that both $\psi_\ell$ and $\psi_u$ satisfy Assumption \ref{ass:psi}. {Note that if $v_1 \geq t$ then $\E(\psi(X) | X \geq t) = w_1\psi(v_1) + w_2\psi(v_2) \leq \psi(\mu)$ by Jensen's inequality. However if $v_1 < t$ then $\E(\psi(X) | X \geq t) = \psi(v_2) > \psi(\mu)$. Hence the optimum must be attained in the case $v_1 < t$, and so we only have to consider this case when checking Condition \ref{cond:objective}. Since $\psi(v_2)$ is increasing in $v_2$ when leaving $v_1$ constant, this condition is satisfied. Hence,}
    we can apply Theorem \ref{thm:reduction_2pt}, and it suffices now to solve
\begin{equation}
\begin{split}
\mbox{maximize } & \psi(v_2) \\
 \mbox{s.t. } &   w_1 + w_2  = 1, \, w_1v_1 + w_2v_2  = \mu, \, w_1\varphi(v_1) + w_2\varphi(v_2)  = s, \text{ and }
     v_1 < t.
\end{split}
\end{equation}
Proposition \ref{prop:two_point} directly implies that in order to solve this problem, we should send $v_1 \rightarrow t$, and we can then determine $v_2$ by substituting the first and second equation in the third one, and sending $v_1 \uparrow  t$ in the third equation, leading to (\ref{eq:v2psiconditional}).
\end{proof}

 For dispersion defined through the $p$th power deviation, we have the following corollaries.

 \begin{corollary}[$p$th Power deviation]\label{pthpowercondexpec}
     Let $\varphi(x) = |x-\mu|^p$ for $p > 1$ and consider the dispersion constraint $\mathbb{E}[|x-\mu|^p] = s^p$. Then
   \begin{equation}
        \max_{\P \in \Pmup}\E(X \mid X \geq t) = \mu + a,
    \end{equation}
     where $a$ is the solution to 
     \begin{equation}\label{simpeler}
    (\mu-t)\cdot a^p + [(\mu-t)^p - s^p]\cdot a - (\mu-t)\cdot s^p = 0.
\end{equation}
 \end{corollary}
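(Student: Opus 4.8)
The plan is to read this off as a direct specialization of Theorem \ref{thm:cond} with the identity objective $\psi(x) = x$ and the dispersion function $\varphi(x) = |x-\mu|^p$. First I would check that the hypotheses of the theorem are met. The objective $\psi(x)=x$ is linear, hence concave and non-decreasing, so it qualifies. For $\varphi(x) = |x-\mu|^p$ with $p > 1$, Assumption \ref{ass:phi} holds: it is strictly convex on $\R$, differentiable on $\R \setminus \{\mu\}$, and $|\varphi(x)/x| = |x-\mu|^p/|x| \to \infty$ as $|x|\to\infty$. Moreover $\varphi(\mu)=0$ with global minimum at $\mu$, so the normalization of Proposition \ref{prop:zero} is automatic and no shift is needed. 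Hence Theorem \ref{thm:cond} applies, with the dispersion level equal to $s^p$ because the constraint for $\Pmup$ is $\E(|X-\mu|^p) = s^p$, and the maximum equals $\psi(v_2) = v_2$.

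Next I would perform the change of variables $a := v_2 - \mu > 0$, so that the claimed optimum $\mu + a$ matches $v_2$. Substituting $\varphi(t) = (\mu-t)^p$ (using $t<\mu$) and $\varphi(v_2) = (v_2-\mu)^p = a^p$ (using $v_2>\mu$) into the defining equation \eqref{eq:v2_conditional}, with right-hand side $s^p$, gives
\[
\frac{a}{a+(\mu-t)}(\mu-t)^p + \frac{\mu-t}{a+(\mu-t)}\,a^p = s^p.
\]
Clearing the positive common denominator $a+(\mu-t) = v_2 - t$ and collecting the terms proportional to $a$ then produces exactly relation \eqref{simpeler},
\[
(\mu-t)\,a^p + \big[(\mu-t)^p - s^p\big]\,a - (\mu-t)\,s^p = 0,
\]
which is the asserted equation for $a$.

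Finally, to justify that $a$ (equivalently $v_2$) is well defined, I would appeal to Proposition \ref{prop:f_incr}: the left-hand side of \eqref{eq:v2_conditional} is strictly increasing in $v_2$ and tends to $\infty$, while it tends to $\varphi(\mu)=0 < s^p$ as $v_2 \downarrow \mu$, so the intermediate value theorem yields a unique admissible $v_2>\mu$, hence a unique positive $a$. This is the solution identified in the corollary.

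The argument is essentially routine once the substitution $a = v_2-\mu$ is in place, since the genuine work—the reduction to a two-point distribution and the identification of the extremal distribution with $v_1 \uparrow t$—was already done in Theorem \ref{thm:cond}. The only place requiring slight care is bookkeeping: tracking the dispersion level as $s^p$ rather than $s$, and ensuring the absolute values in $\varphi(t)$ and $\varphi(v_2)$ are resolved with the correct signs because $t < \mu < v_2$. I do not anticipate any analytic obstacle beyond this clerical step.
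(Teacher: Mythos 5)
Your proposal is correct and matches the paper's (implicit) argument exactly: the corollary is read off from Theorem \ref{thm:cond} by specializing $\psi(x)=x$ and $\varphi(x)=|x-\mu|^p$ with dispersion level $s^p$, and the substitution $a = v_2-\mu$ together with clearing the denominator $v_2-t = a+(\mu-t)$ turns \eqref{eq:v2_conditional} into \eqref{simpeler}. Your added verification of Assumption \ref{ass:phi} and the existence/uniqueness remark via Proposition \ref{prop:f_incr} are sound and consistent with the paper's framework.
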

For mean-variance ambiguity $p = 2$ we retrieve
\begin{equation}
        \max_{\P \in \Pmusigma}\E(X \mid X \geq t) = \mu + \frac{\sigma^2}{\mu-t},
    \end{equation}
a result first obtained by Mallows and Richter \cite{mallows1969}. The proof follows from observing that for $p = 2$, equation \eqref{simpeler} reduces to a quadratic equation, whose positive root is given by $a = \sigma^2/(\mu-t)$.

Now consider the MAD case $p = 1$. Note that if $t \geq \mu - s/2$ then the two-point distribution with $v_1 \rightarrow \mu - s/2$ and $v_2 \rightarrow \infty$ yields a conditional expectation that is infinite in the limit. For this reason we assume that $t < \mu - s/2$. We then find the following result:
\begin{theorem}\label{thmmaddd}
For $\mu \in \R$ and $0 \leq d < 2\mu$, it holds that for any $t < \mu - d/2$,
     \begin{equation}\label{eq:thmmaddd}
        \max_{\P \in \Pmud}\E(X \mid X \geq t) = \mu + \frac{(\mu-t)d}{2(\mu-t)-d}.
    \end{equation}
\end{theorem}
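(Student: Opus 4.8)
The plan is to treat this as the mean-absolute-deviation ($p=1$) specialization of the conditional-expectation result in Theorem \ref{thm:cond}, replacing the generic reduction machinery (Theorem \ref{thm:reduction_2pt}) by its MAD counterpart (Theorem \ref{thm:reduction_2pt_mad}). First I would write $\E(X \mid X \geq t) = \E(\psi_u(X))/\E(\psi_\ell(X))$ with $\psi_\ell(x) = \mathbb{I}_{x \geq t}$ and $\psi_u(x) = x\,\mathbb{I}_{x \geq t}$ (so $\psi_u(x) = 0$ for $x < t$ and $\psi_u(x) = x$ for $x \geq t$), which is exactly the set-up of Theorem \ref{thm:cond} with $\psi$ the identity. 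Both functions satisfy Assumption \ref{ass:psi}: on each of $\{x < t\}$ and $\{x \geq t\}$ the function $\psi_u$ is either constant or linear (hence concave and non-decreasing), and $\psi_\ell$ is constant (hence convex and non-increasing).

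The crucial step is to verify Condition \ref{cond:mad_condition}, and here I would reuse the dichotomy from the proof of Theorem \ref{thm:cond}. For a feasible two-point distribution with masses $w_1, w_2$ on $v_1 < \mu < v_2$, if $v_1 \geq t$ then conditioning on $X \geq t$ is vacuous and $\E(X \mid X \geq t) = w_1 v_1 + w_2 v_2 = \mu$, whereas if $v_1 < t$ then only the atom at $v_2$ survives the conditioning and $\E(X \mid X \geq t) = v_2 > \mu$. Hence the optimum is attained with $v_1 < t$, and in that regime the objective $g(v_1,v_2)$ equals $v_2$: it is strictly increasing in $v_2$ for fixed $v_1$ and constant (hence non-increasing) in $v_1$ for fixed $v_2$. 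Both clauses of Condition \ref{cond:mad_condition} therefore hold on the relevant region, so Theorem \ref{thm:reduction_2pt_mad} applies and reduces the problem to maximizing $v_2$ over two-point distributions in $\Pmud$ with $v_1 < t$.

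It remains to carry out this finite optimization using the explicit parametrization of Proposition \ref{prop:two_point_mad}. Since $t < \mu - d/2$ by hypothesis, every $v_1 < t$ lies in the admissible range $(-\infty, \mu - d/2)$, so the two-point distribution with left atom $v_1$ and right atom $v_2(v_1) = \mu + d/2 + d^2/(4\mu - 4v_1 - 2d)$ from \eqref{eq:madsolution} is feasible. Because $v_2(v_1)$ is increasing in $v_1$, the maximum of $v_2$ is obtained by sending $v_1 \uparrow t$, giving $v_2(t) = \mu + d/2 + d^2/(4\mu - 4t - 2d)$. A short simplification, writing $a = \mu - t$ so that $4\mu - 4t - 2d = 2(2a-d)$ and $d/2 + d^2/(2(2a-d)) = ad/(2a-d)$, yields $\mu + (\mu-t)d/(2(\mu-t)-d)$, the claimed value; the hypothesis $t < \mu - d/2$ guarantees $2(\mu-t) - d > 0$, so the expression is well defined and positive.

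The main obstacle, as in the proof of Theorem \ref{thm:cond}, is the verification of the MAD-specific Condition \ref{cond:mad_condition} rather than any heavy computation: one must justify restricting to the regime $v_1 < t$ (via the Jensen dichotomy above) and check that the hypothesis $t < \mu - d/2$ keeps the limiting configuration $v_1 \uparrow t$ inside the domain where the construction of Proposition \ref{prop:two_point_mad} is valid. Once these boundary issues are settled, the remaining steps are a direct substitution into the closed-form expression and elementary algebra.
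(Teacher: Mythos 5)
Your proof is correct and follows essentially the same route as the paper: reduce via Theorem \ref{thm:reduction_2pt_mad} after checking Assumption \ref{ass:psi} and Condition \ref{cond:mad_condition} through the dichotomy ($\mu$ when $v_1 \geq t$, $v_2$ when $v_1 < t$), then send $v_1 \uparrow t$ in the explicit parametrization \eqref{eq:madsolution}. Your algebraic simplification of $v_2(t)$ matches the claimed bound, so there is nothing to add.
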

\begin{proof}
We wish to apply Theorem \ref{thm:reduction_2pt_mad}. Since we already verified Assumption \ref{ass:psi} we only have to check Condition \ref{cond:mad_condition}, and since we already have Condition \ref{cond:objective} we only have to make verify that $\E(\psi_u(X))/\E(\psi_\ell(X))$ is non-increasing in $v_1$ if $v_2$ is constant. For a fixed $v_2$, we have that
\begin{equation*}
    \frac{\E(\psi_u(X))}{\E(\psi_\ell(X))} = \begin{cases}
        \mu & \mbox{ if } v_1 \geq t, \\
        v_2 & \mbox{ if } v_1 < t.
    \end{cases}
\end{equation*}
Since $v_2 > \mu$ this function is non-increasing in $v_1$, and hence Condition \ref{cond:mad_condition} is satisfied and we can apply Theorem \ref{thm:reduction_2pt_mad}. Proposition \ref{prop:two_point_mad} directly implies that we should sent $v_1 \uparrow  t$, and the expression for $v_2(v_1)$ in  (\ref{eq:madsolution}) then yields (\ref{eq:thmmaddd}). 
\end{proof}
In the recent work \cite{van2023generalized}
an alternative proof of Theorem~\ref{thmmaddd} is presented using the classical primal-dual method for solving semi-infinite fractional programs discussed in the introduction \cite{ji2021data,hettich1993semi,liu2017distributionally}.

\subsection{Tail bound}
\label{sec:app_tail}
In this section we consider the minimization of the tail bound $\Prob(X \geq t)$ for some given value $t$. Using similar reasoning as in Section \ref{sec:app_cond}, it can be argued that if $t \geq \mu$, then the two-point distribution with $v_1 \rightarrow \mu$ (so that $v_2 \rightarrow \infty$) solves the problem, and the tail bound equals zero in the limit. Therefore, we focus on the case $t < \mu$.

\begin{theorem} Let $t < \mu$ and let $\varphi$ satisfy Assumption \ref{ass:phi}. Then,
    \begin{equation}
        \min_{\P \in \Pmus} \P(X \geq t) = \frac{\mu - t}{v_2 -t }
    \end{equation}
 with  $v_2$ the solution to
    \begin{equation}
        \frac{v_2 - \mu}{v_2 - t}\varphi(t) + \frac{\mu - t}{v_2 - t}\varphi(v_2) = s.
        \label{eq:v2_conditional}
    \end{equation}
        The distribution solving the above problem is the two-point distribution with $v_1 \uparrow  t$. 
    \label{thm:tail}
\end{theorem}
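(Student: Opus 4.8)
The plan is to cast the minimization of $\P(X \geq t)$ into the ratio framework of \eqref{eq:problem} in complete parallel with the proof of Theorem~\ref{thm:cond}. Since $\P(X \geq t) = \E(\mathbb{I}_{X \geq t})$, minimizing it is the same as maximizing its reciprocal, so I would write the objective as $\E(\psi_u(X))/\E(\psi_\ell(X))$ with $\psi_u \equiv 1$ and $\psi_\ell(x) = \mathbb{I}_{x \geq t}$. Both functions are constant on each of the two pieces $\{x < t\}$ and $\{x \geq t\}$, so $\psi_u$ is (weakly) concave and non-decreasing while $\psi_\ell$ is (weakly) convex and non-increasing on each piece; hence Assumption~\ref{ass:psi} holds, and $\varphi$ is assumed to satisfy Assumption~\ref{ass:phi}.

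Next I would verify Condition~\ref{cond:objective} for every fixed $v_1 < \mu$. For a two-point distribution with $v_1 < \mu < v_2$ and $t < \mu$, there are two cases. If $v_1 \geq t$, then both atoms lie in $\{x \geq t\}$, so $\P(X \geq t) = 1$ and the objective $g(v_1,v_2) = 1$ is constant in $v_2$. If $v_1 < t$, then only $v_2$ lies in the tail, so $\P(X \geq t) = w_2(v_2)$ and $g(v_1,v_2) = 1/w_2(v_2) = (v_2 - v_1)/(\mu - v_1)$, which is increasing in $v_2$. In both cases $g$ is non-decreasing in $v_2$, so Condition~\ref{cond:objective} is satisfied and Theorem~\ref{thm:reduction_2pt} applies: it suffices to minimize $\P(X \geq t)$ over two-point distributions satisfying the equality constraints. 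Moreover, since $g = 1$ when $v_1 \geq t$ but $g > 1$ when $v_1 < t$, the optimum must occur in the regime $v_1 < t$, where $\P(X \geq t) = w_2$.

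It then remains to minimize $w_2 = (\mu - v_1)/(v_2 - v_1)$, equivalently to maximize $h(v_1) := (v_2(v_1) - v_1)/(\mu - v_1)$, along the feasible curve $v_1 \mapsto (v_1, v_2(v_1))$ supplied by Proposition~\ref{prop:two_point}, over the range $v_1 < t$. This monotonicity step is the crux of the argument and the main obstacle, since $v_2(v_1)$ has no closed form for general $\varphi$. I would handle it by differentiating: the numerator of $h'(v_1)$ simplifies to $v_2'(v_1)(\mu - v_1) + (v_2 - \mu)$, and each term is strictly positive because $v_2'(v_1) > 0$ and $v_2 > \mu$ by Proposition~\ref{prop:two_point}, so $h$ is strictly increasing and its supremum over $v_1 < t$ is attained in the limit $v_1 \uparrow t$. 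Sending $v_1 \uparrow t$ in the equality constraint \eqref{eq:f} then yields the defining equation \eqref{eq:v2_conditional} for the limiting $v_2$, and the corresponding tail value equals $w_2 = (\mu - t)/(v_2 - t)$, which is the claimed bound. As in Theorem~\ref{thm:cond}, the endpoint $v_1 = t$ itself is excluded, since there the atom at $t$ would fall into the tail and force $\P(X \geq t) = 1$; the bound is therefore realized as a limit along $v_1 \uparrow t$.
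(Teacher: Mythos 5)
Your proof is correct and follows essentially the same route as the paper's: embed the problem into the ratio framework (the paper converts min to max via negation, taking $\psi_u(x) = -\mathbb{I}_{x \geq t}$ and $\psi_\ell \equiv 1$, rather than your reciprocal with $\psi_u \equiv 1$, $\psi_\ell(x) = \mathbb{I}_{x \geq t}$, but this difference is immaterial), verify Assumption \ref{ass:psi} and Condition \ref{cond:objective}, invoke Theorem \ref{thm:reduction_2pt}, restrict to $v_1 < t$, and send $v_1 \uparrow t$. If anything, your explicit computation that the numerator of $h'(v_1)$ equals $v_2'(v_1)(\mu - v_1) + (v_2 - \mu) > 0$ makes precise the final monotonicity step that the paper dispatches with ``a similar argument as in the proof of Theorem \ref{thm:cond}'', which is not fully immediate here because $w_2(v_1) = (\mu - v_1)/(v_2(v_1) - v_1)$ depends on $v_1$ directly and not only through the increasing map $v_2(v_1)$.
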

\begin{proof}
    The framework in Section \ref{sec:framework} is written for a maximization problem. We can convert the minimization problem at hand to a maximization problem in that framework by setting $$
    \psi_u(x) = \left\{\begin{array}{ll}
        0  & \text{ if } x < t, \\
        -1 &  \text{ if } x \geq t,
    \end{array}\right. 
    $$
    and $\phi_\ell(x) = 1$ for all $x$.
    Then these functions satisfy Assumption \ref{ass:psi}. Furthermore, the function $g$ in Condition \ref{cond:objective} reduces to $1$ in case $v_1 \geq t$, and to $g(v_2) = -(\mu-v_1)/(v_2-v_1)$ when $v_1 < t$, which is non-decreasing in $v_2$. At this point, using a similar argument as in the proof of Theorem \ref{thm:cond} gives the desired result.
\end{proof}

 \begin{corollary}[Power deviation]\label{pthpowertailbound}
    Let $\varphi(x) = |x-\mu|^p$ for $p > 1$ and consider the dispersion constraint $\mathbb{E}[|x-\mu|^p] = s^p$. Then
   \begin{equation}
        \min_{\P \in \Pmus} \P(X \geq t) = \frac{\mu-t}{a + \mu-t},
    \end{equation}
     where $a$ is the solution to
\begin{equation}\label{simpeler2}
 (\mu-t)\cdot a^p + [(\mu-t)^p - s^p]\cdot a - (\mu-t)\cdot s^p = 0.
\end{equation}
\end{corollary}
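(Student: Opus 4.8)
The plan is to invoke Theorem \ref{thm:tail} directly, since the corollary is merely its specialization to $\varphi(x) = |x-\mu|^p$. First I would verify that this $\varphi$ satisfies Assumption \ref{ass:phi}: for $p > 1$ the map $x \mapsto |x-\mu|^p$ is strictly convex on $\R$, is differentiable on $\R \setminus \{\mu\}$ with derivative $p\,|x-\mu|^{p-1}\,\mathrm{sgn}(x-\mu)$, and satisfies $\lim_{|x| \to \infty} \big|\,|x-\mu|^p/x\,\big| = \infty$. Moreover it already has $\varphi(\mu) = 0$ and attains its global minimum at $\mu$, so no preliminary shift via Proposition \ref{prop:zero} is required. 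Consequently Theorem \ref{thm:tail} applies and yields $\min_{\P \in \Pmus} \P(X \geq t) = (\mu - t)/(v_2 - t)$, where $v_2 > \mu$ is the unique solution of the defining equation (\ref{eq:v2_conditional}).

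Next I would substitute $\varphi(x) = |x-\mu|^p$ into (\ref{eq:v2_conditional}) and exploit the orderings $t < \mu$ and $v_2 > \mu$ to remove the absolute values, writing $|t-\mu|^p = (\mu-t)^p$ and $|v_2-\mu|^p = (v_2-\mu)^p$. The equation then reads
\[
\frac{v_2 - \mu}{v_2 - t}(\mu - t)^p + \frac{\mu - t}{v_2 - t}(v_2 - \mu)^p = s^p .
\]
Introducing $a = v_2 - \mu$ (so that $v_2 - t = a + \mu - t$, and the target probability is exactly $(\mu - t)/(a + \mu - t)$) and clearing the common denominator $a + \mu - t$, I would arrive at
\[
a(\mu - t)^p + (\mu - t)\,a^p = s^p\,(a + \mu - t) ,
\]
which rearranges into the claimed polynomial (\ref{simpeler2}).

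I expect no genuine obstacle: this is the same change of variables already carried out for Corollary \ref{pthpowercondexpec}, and in fact the polynomial (\ref{simpeler2}) coincides verbatim with (\ref{simpeler}), because Theorems \ref{thm:cond} and \ref{thm:tail} share the identical defining equation for $v_2$. The only point worth a remark is the well-definedness of $a$: since Theorem \ref{thm:tail} already guarantees a unique $v_2 > \mu$, the quantity $a = v_2 - \mu$ is the unique positive root of (\ref{simpeler2}), and the expression $(\mu - t)/(a + \mu - t)$ is unambiguous. If desired, this uniqueness can be re-derived directly from the strict monotonicity of $f$ in $v_2$ established in Proposition \ref{prop:f_incr}.
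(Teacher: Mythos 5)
Your proposal is correct and follows exactly the route the paper intends: the corollary is stated without separate proof as the specialization of Theorem \ref{thm:tail} to $\varphi(x)=|x-\mu|^p$, and your verification of Assumption \ref{ass:phi}, the substitution $a = v_2-\mu$ into equation (\ref{eq:v2_conditional}) using $t<\mu<v_2$ to drop the absolute values, and the rearrangement into (\ref{simpeler2}) are precisely the omitted steps. Your closing remark that (\ref{simpeler2}) coincides with (\ref{simpeler}) because Theorems \ref{thm:cond} and \ref{thm:tail} share the same defining equation for $v_2$, with uniqueness coming from Proposition \ref{prop:f_incr}, is also consistent with the paper.
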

For $p = 2$, we recover the Cantelli-type inequality
\begin{equation*}
    \min_{\P \in \Pmusigma} \P(X \geq t) = \frac{(\mu - t)^2}{\sigma^2 + (\mu - t)^2}.
\end{equation*}
For $p = 1$, since Condition \ref{cond:mad_condition} is clearly satisfied, we can apply Theorem \ref{thm:reduction_2pt_mad} and with an argument similar to the proof of Theorem \ref{thmmaddd} we find
 $$
 \min_{\P \in \Pmud} \P(X \geq t) = 1 - \frac{d}{2(\mu - t)}
 $$
matching the result from
\cite[Theorem 2]{roos2019chebyshev}.
\subsection{Max-operator}\label{sec:maxoperator}

In this section we study the max-operator $\E(\max\{X-t,0\})$. In Theorem \ref{thm:max} we give the main result of this section.

\begin{theorem}\label{thm:max}
Let $t \in \R$ and let $\varphi$ satisfy Assumption \ref{ass:phi}. The solution to
\begin{align}
    \max_{\P \in \Pmus} \E(\max\{X-t,0\})
\end{align}
is the unique two-point distribution for which $v_1, v_2$ solve the system
    $$
    \begin{array}{c}
        (v_1 - t)\varphi'(v_1) - \varphi(v_1) = (v_2 - t)\varphi'(v_2) - \varphi(v_2),\vspace{0.3cm} \, 
        \displaystyle \frac{v_2 - \mu}{v_2 - v_1}\varphi(v_1) + \frac{\mu - v_1}{v_2 - v_1}\varphi(v_2) = s.
    \end{array}
    $$
\label{thm:max}
\end{theorem}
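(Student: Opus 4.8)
The plan is to recognize the max-operator as an instance of the general framework of Section \ref{sec:framework} and then solve the reduced two-point problem by Lagrange multipliers. First I would set $\psi_u(x) = \max\{x-t,0\}$ and $\psi_\ell(x) \equiv 1$, so that the objective $\E(\max\{X-t,0\})$ equals $\E(\psi_u(X))/\E(\psi_\ell(X))$ with the same threshold $t$. One checks directly that $\psi_u$ is non-decreasing and concave (indeed affine or constant) on each of $\{x<t\}$ and $\{x\ge t\}$, and that $\psi_\ell$ is trivially convex and non-increasing, so Assumption \ref{ass:psi} holds; Assumption \ref{ass:phi} is assumed in the statement. To invoke Theorem \ref{thm:reduction_2pt} it remains to verify Condition \ref{cond:objective}: since $\psi_\ell\equiv 1$, the function $g(v_1,v_2)$ equals $\mu-t$ (constant) when $v_1\ge t$, equals $0$ when $v_2<t$, and equals $\tfrac{\mu-v_1}{v_2-v_1}(v_2-t)$ when $v_1<t\le v_2$; the last expression is increasing in $v_2$ and the three pieces join continuously, so $g$ is non-decreasing in $v_2$. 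Hence the reduction applies and it suffices to maximize $w_1\psi_u(v_1)+w_2\psi_u(v_2)$ subject to the three moment equations in \eqref{eq:problem_2pt}.

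Next I would solve this finite-dimensional problem via a Lagrangian in the variables $(v_1,v_2,w_1,w_2)$ with multipliers $\lambda_0,\lambda_1,\lambda_2$ for the three constraints. Before writing stationarity I would argue that the optimizer lies in the interior regime $v_1<t<v_2$ with $w_1,w_2>0$: because $\max\{\cdot-t,0\}$ is convex, a feasible mean-preserving spread strictly increases the objective, so the dispersion budget is exhausted, the optimum cannot have $v_1\ge t$ (there the objective is pinned at $\mu-t$ and can be strictly improved), and it cannot have $v_2\le t$ (there the objective is $0$). In this regime $\psi_u$ is differentiable at both support points, with $\psi_u(v_1)=\psi_u'(v_1)=0$ and $\psi_u(v_2)=v_2-t$, $\psi_u'(v_2)=1$, so the non-differentiability at $t$ is harmless. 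Stationarity in $w_i$ gives $\psi_u(v_i)=\lambda_0+\lambda_1 v_i+\lambda_2\varphi(v_i)$, and stationarity in $v_i$ gives $\psi_u'(v_i)=\lambda_1+\lambda_2\varphi'(v_i)$, for $i=1,2$.

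The key computation is then to eliminate the three multipliers. Writing $G(x):=(x-t)\varphi'(x)-\varphi(x)$, I would substitute $\varphi'(v_1)=-\lambda_1/\lambda_2$ and $\varphi(v_1)=-(\lambda_0+\lambda_1 v_1)/\lambda_2$ (from the $i=1$ equations, where $\psi_u\equiv 0$) to obtain $G(v_1)=(\lambda_0+\lambda_1 t)/\lambda_2$, and analogously, using $\psi_u'(v_2)=1$ and $\psi_u(v_2)=v_2-t$, obtain $G(v_2)=(\lambda_0+\lambda_1 t)/\lambda_2$. Equating yields the first equation of the stated system, $(v_1-t)\varphi'(v_1)-\varphi(v_1)=(v_2-t)\varphi'(v_2)-\varphi(v_2)$, while the dispersion constraint rewritten through \eqref{eq:f} is the second equation. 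Geometrically these conditions say that an affine-plus-$\varphi$ majorant $\lambda_0+\lambda_1 x+\lambda_2\varphi(x)$ touches $\psi_u$ in value and slope at both $v_1$ and $v_2$, the classical tangency picture for moment problems.

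Finally I would establish that the system has a unique solution and that it is the global maximizer. Strict convexity of $\varphi$ implies $G$ strictly decreases on $(-\infty,t)$ and strictly increases on $(t,\infty)$ (formally $G'(x)=(x-t)\varphi''(x)$ when $\varphi$ is twice differentiable, and in general this follows from strict monotonicity of $\varphi'$), with minimum at $t$; hence along the locus $G(v_1)=G(v_2)$ the coordinate $v_2$ is a strictly \emph{decreasing} function of $v_1$ for $v_1<t<v_2$. By contrast, along the dispersion curve $f(v_1,v_2)=s$ the map $v_1\mapsto v_2(v_1)$ is strictly \emph{increasing} by Proposition \ref{prop:two_point}. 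Two strictly monotone curves of opposite sense meet at most once, and a boundary comparison (as $v_1\to-\infty$ the tangency curve forces $v_2\to\infty$ while the dispersion curve forces $v_2\to\mu^+$, and these inequalities reverse as $v_1\uparrow t$) shows they meet exactly once; boundedness of the objective from Assumption \ref{ass:phi}(iii) together with the interior-optimum argument confirms this unique critical point is the maximizer. I expect this last step to be the main obstacle: cleanly ruling out the boundary regimes ($v_1\to-\infty$, $v_1\uparrow\mu$, $v_1\uparrow t$) and converting the opposing monotonicities into a rigorous existence-and-uniqueness statement for all positions of $t$ relative to $\mu$, whereas the multiplier elimination leading to the system is routine.
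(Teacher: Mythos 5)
Your proposal is correct and takes essentially the same approach as the paper: the identical reduction via Theorem \ref{thm:reduction_2pt} with $\psi_u(x)=\max\{x-t,0\}$ and $\psi_\ell\equiv 1$, a Lagrange-multiplier computation yielding the tangency system $(v_1-t)\varphi'(v_1)-\varphi(v_1)=(v_2-t)\varphi'(v_2)-\varphi(v_2)$ together with the dispersion constraint, uniqueness from the opposing monotonicities of the tangency locus and the dispersion curve, and a boundary analysis (with the same case split on $t$ versus $\mu$) to get existence and to confirm the critical point is a maximum. The only cosmetic differences are that the paper eliminates $w_1,w_2$ first and uses a single multiplier while you keep three, and its existence step compares equal boundary values of the objective rather than a curve-crossing argument.
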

\begin{proof}
    We can apply Theorem \ref{thm:reduction_2pt} by taking $\psi_\ell(x) = 1$ for all $x \in \R$ and $\psi_u(x) = \max\{x-t,0\}$. It is easily verified (using the fact that $v_1 < t$) that Condition \ref{cond:objective} is satisfied and hence we may apply Theorem \ref{thm:reduction_2pt}. This means that we have to solve the optimization problem
\begin{align*}
	\mbox{maximize } & w_2(v_2 - t) \\
	\mbox{subject to } & w_1 + w_2 = 1,  w_1v_1 + w_2v_2  = \mu, \text{ and } w_1\varphi(v_1) + w_2\varphi(v_2) = s.
\end{align*}
We can solve the first two constraints for $w_1, w_2$ which reduces the problem to
\begin{align*}
	\mbox{maximize } & \frac{(v_2 - t)(\mu - v_1)}{v_2 - v_1} \\
	\mbox{subject to } & \frac{v_2 - \mu}{v_2 - v_1}\varphi(v_1) + \frac{\mu - v_1}{v_2 - v_1}\varphi(v_2) = s.
\end{align*}
To solve this maximization problem we use Lagrange multipliers. The Lagrangian becomes
\begin{equation}
	\mathcal{L}(v_1, v_2, \lambda) = \frac{(v_2 - t)(\mu - v_1)}{v_2 - v_1} + \lambda\left(\frac{v_2 - \mu}{v_2 - v_1}\varphi(v_1) + \frac{\mu - v_1}{v_2 - v_1}\varphi(v_2) - s\right).
\end{equation}
At a local extremum the partial derivatives of $\mathcal{L}$ with respect to $v_1, v_2, \lambda$ must be zero, which implies that
\begin{align*}
	& \frac{(v_2 - \mu)\big(\lambda\big((v_2 - v_1)\varphi'(v_1) - \varphi(v_2) + \varphi(v_1)\big) - v_2 + t\big)}{(v_2 - v_1)^2}  = 0 \\
	& \frac{(\mu - v_1)\big(\lambda\big((v_2 - v_1)\varphi'(v_2) - \varphi(v_2) + \varphi(v_1)\big) - v_1 + t\big)}{(v_2 - v_1)^2}  = 0 \\ 
	& \frac{v_2 - \mu}{v_2 - v_1}\varphi(v_1) + \frac{\mu - v_1}{v_2 - v_1}\varphi(v_2) - s = 0.
\end{align*}
We can reduce the first two equations into a single equation by eliminating $\lambda$, which after some further simplification (using the fact that $v_2 - \mu, \mu - v_1$ and $v_2 - v_1$ are nonzero) yields
\begin{equation}
	(v_1 - q)\varphi'(v_1) - \varphi(v_1) = (v_2 - q)\varphi'(v_2) - \varphi(v_2)
\end{equation}
which means that at this point we have the following implicit definition of the local extremum $v_1, v_2$. Any solution $(v_1, v_2)$ of
\begin{align*}
	f_1(v_1, v_2, t) & := (v_1 - t)\varphi'(v_1) - (v_2 - t)\varphi'(v_2) + \varphi(v_2) - \varphi(v_1) = 0 \\
	f_2(v_1, v_2) & := \frac{v_2 - \mu}{v_2 - v_1}\varphi(v_1) + \frac{\mu - v_1}{v_2 - v_1}\varphi(v_2) - s = 0
\end{align*}
is a local extremum. The question is if the solution to this system is unique, if it exists at all. We first turn to uniqueness. We note that
\begin{align*}
	\frac{\d f_1}{\d v_1}  = -(t - v_1)\varphi''(v_1) < 0, \text{ and }
	\frac{\d f_1}{\d v_2}  = -(v_2 - t)\varphi''(v_2) < 0,
\end{align*}
which shows that if $v_2$ increases then $v_1$ must decrease to maintain $f_1(v_1, v_2, t) = 0$ (and vice versa). However, we also have
\begin{align*}
	\frac{\d f_2}{\d v_1}  = \frac{v_2 - \mu}{v_2 - v_1}\left[\varphi'(v_1) - \frac{\varphi(v_2) - \varphi(v_1)}{v_2-v_1} \right]  < 0, \text{ and }
	\frac{\d f_2}{\d v_2} = \frac{\mu - v_1}{v_2 - v_1}\left[\varphi'(v_2) - \frac{\varphi(v_2) - \varphi(v_1)}{v_2 - v_1}\right] > 0,
\end{align*}
which means that if $v_2$ increases then $v_1$ should also increase to maintain $f_2(v_1, v_2) = 0$ (and vice versa). This makes it impossible to have two different solutions, hence the system has at most one solution. 

Now we consider the boundary cases. Here we need to distinguish between two cases. \\
{Case 1.} $t \leq \mu$. In this case, if $v_1 \rightarrow -\infty$ then the objective converges to $\mu - t$, and if $v_1 \uparrow \mu$ then the objective also becomes $\mu - t$. \\
{Case 2.} $t > \mu$. In this case, if $v_1 \rightarrow -\infty$ then the objective converges to 0, and if $v_1 \uparrow \mu$ then the objective also becomes 0. 

In both cases the boundary values are equal, hence there must be an extremum between them (thereby proving the existence of a solution to the system). And since it is unique, it is either a global maximum or a global minimum. To see that it is a global maximum, we simply show that there exists a two-point distribution with objective strictly higher than the boundary values. In the case $t \leq \mu$, we simply take any two-point distribution with $v_1 < t$. Then the objective is
\begin{equation*}
	w_2(v_2 - t) > w_2(v_2 - t) + w_1(v_1 - t) = (w_2v_2 + w_1v_1) - t(w_1 + w_2) = \mu - t.
\end{equation*}
In the other case, we take any distribution with $v_2 > t$ which yields a strictly positive objective, treating this case as well. We conclude that the unique solution to the system is indeed a maximum for the original optimization problem. 
\end{proof}

If we consider the  $p$th power deviation we obtain the following result.
\begin{corollary}\label{col:max} (Power deviation). Let $\varphi(x) = |x - \mu|^p$ for $p > 1$ and consider the dispersion constraint $\E(|x - \mu|^p) = s^p$, and let $t \in \R$. Then the solution to
\begin{equation*}
    \max_{\P \in \Pmup} \E[\max\{X - t, 0\}]
\end{equation*}
is the unique two-point distribution for $v_1, v_2$ which solve the system
     $$
    \begin{array}{c}
        p(t - v_1)(\mu - v_1)^{p-1} - (\mu - v_1)^p = p(v_2 - t)(v_2 - \mu)^{p-1} - (v_2 - \mu)^p,\vspace{0.3cm} \\ 
        \displaystyle \frac{(v_2 - \mu)(\mu - v_1)}{v_2 - v_1}\big((\mu - v_1)^{p-1} + (v_2 - \mu)^{p-1}\big) = s^p.
    \end{array}
    $$
\end{corollary}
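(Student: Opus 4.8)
The plan is to invoke Theorem \ref{thm:max} directly, since the objective $\E(\max\{X-t,0\})$ is exactly the one treated there; the only work is to specialize the general dispersion function $\varphi$ to $\varphi(x) = |x-\mu|^p$ and to simplify the resulting system. First I would verify that $\varphi(x) = |x-\mu|^p$ satisfies Assumption \ref{ass:phi} for $p > 1$: it is strictly convex on $\R$ (its second derivative $p(p-1)|x-\mu|^{p-2}$ is positive away from $\mu$, and strict convexity extends across $\mu$), it is differentiable on $\R \setminus \{\mu\}$, and it grows superlinearly, $\lim_{|x| \rightarrow \infty} |\varphi(x)/x| = \infty$. Note also that $\varphi(\mu) = 0$ and $\varphi$ attains its global minimum at $\mu$, so the normalization of Proposition \ref{prop:zero} already holds. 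Since the objective is unchanged (i.e.\ $\psi_u(x) = \max\{x-t,0\}$ and $\psi_\ell \equiv 1$), Condition \ref{cond:objective} has already been checked inside the proof of Theorem \ref{thm:max}, so all hypotheses of that theorem are in place and it applies verbatim.

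Next I would compute $\varphi'$ piecewise. For $x > \mu$ one has $\varphi(x) = (x-\mu)^p$ and $\varphi'(x) = p(x-\mu)^{p-1}$, while for $x < \mu$ one has $\varphi(x) = (\mu-x)^p$ and $\varphi'(x) = -p(\mu-x)^{p-1}$. Substituting these into the first equation of the system of Theorem \ref{thm:max}, namely $(v_1 - t)\varphi'(v_1) - \varphi(v_1) = (v_2 - t)\varphi'(v_2) - \varphi(v_2)$, and using $v_1 < \mu < v_2$, the left-hand side becomes $p(t-v_1)(\mu-v_1)^{p-1} - (\mu-v_1)^p$ and the right-hand side becomes $p(v_2-t)(v_2-\mu)^{p-1} - (v_2-\mu)^p$, which is exactly the first stated equation.

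Finally I would simplify the dispersion constraint. The general constraint in Theorem \ref{thm:max} reads $\frac{v_2-\mu}{v_2-v_1}\varphi(v_1) + \frac{\mu-v_1}{v_2-v_1}\varphi(v_2) = s$, where here the right-hand side is $s^p$ because the imposed constraint is $\E(|X-\mu|^p) = s^p$. Substituting $\varphi(v_1) = (\mu-v_1)^p$ and $\varphi(v_2) = (v_2-\mu)^p$ and factoring out the common factor $(\mu-v_1)(v_2-\mu)$ yields $\frac{(v_2-\mu)(\mu-v_1)}{v_2-v_1}\big((\mu-v_1)^{p-1} + (v_2-\mu)^{p-1}\big) = s^p$, the second stated equation. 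Uniqueness of the solution and the fact that this stationary point is a global maximum are inherited directly from Theorem \ref{thm:max}. There is no real conceptual obstacle here; the computation is routine, and the only point that needs a little care is the piecewise sign of $\varphi'$ on either side of $\mu$, which is what flips the sign of the first term on each side of the first equation.
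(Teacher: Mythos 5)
Your proposal is correct and matches the paper's (implicit) proof exactly: the corollary is obtained by applying Theorem \ref{thm:max} with $\varphi(x)=|x-\mu|^p$ (which satisfies Assumption \ref{ass:phi} for $p>1$), substituting the piecewise derivative $\varphi'(x)=\pm p|x-\mu|^{p-1}$ on either side of $\mu$ into the stationarity equation, and rewriting the dispersion constraint with right-hand side $s^p$. Uniqueness and global optimality are indeed inherited directly from the theorem, so nothing further is needed.
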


When $p = 2$, we have to solve the system
$$
    \begin{array}{c}
        (v_1 - t)\varphi'(v_1) - \varphi(v_1) = (v_2 - t)\varphi'(v_2) - \varphi(v_2), \text{ and }
        \displaystyle \frac{v_2 - \mu}{v_2 - v_1}\varphi(v_1) + \frac{\mu - v_1}{v_2 - v_1}\varphi(v_2) = s,
    \end{array}
    $$
with $\varphi(x) = x^2$ and $s = \mu^2 + \sigma^2$. The first equation becomes
\begin{align*}
    v_1^2 - 2tv_1  = v_2^2 - 2tv_2 \Longleftrightarrow 
    v_1^2 - 2tv_1 + t^2  = v_2^2 - 2tv_2 + t^2 \Longleftrightarrow
    (v_1 - t)^2  = (v_2 - t)^2
\end{align*}
and since we know that $v_1 < t \leq v_2$, we find that $v_2 - t = t - v_1$, or $v_1 = 2t - v_2$. The second equation becomes
$    (v_2 - \mu)(\mu - v_1) = \sigma^2 $
and after plugging in $v_1 = 2t - v_2$ we find
$    v_2^2 - 2tv_2 + 2t\mu - \mu^2 \sigma^2 = 0 $
with solution $v_2 = t + \sqrt{(t - \mu^2) + \sigma^2}$ (note that the other solution is less than $t$ and hence not feasible). This yields $v_1 = t - \sqrt{(t - \mu)^2 + \sigma^2}$, matching the classical result of \cite{scarf1958min}. \\

For the MAD case $p = 1$, we find the following result.
\begin{theorem}\label{thm:maxoperatormad}
    Let $t \in \R$ and $\varphi(x) = |x - \mu|$. Then the solution to
    \begin{align*}
        \max_{\P \in \Pmud} \E[\max\{X-t,0\}]
    \end{align*}
    can be described as follows.
    \begin{enumerate}
        \item[{\rm (i)}] If $\mu - t > 0$, the optimal (limiting) distribution is obtained by taking  $v_1 \rightarrow -\infty$, $v_2 \rightarrow \mu + d/2$, with objective  $\mu - t + d/2$. 
        \item[{\rm (ii)}]  If $\mu - t < 0$, the optimal distribution is obtained by taking $v_1\rightarrow \mu - d/2$, $v_1 \rightarrow \infty$,  with objective is $d/2$. 
        \item[{\rm (iii)}]  If $\mu - t = 0$,  any distribution for $X$ yields the same objective $d/2$. 
    \end{enumerate}
\end{theorem}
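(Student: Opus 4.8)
The plan is to invoke the MAD reduction theorem, Theorem \ref{thm:reduction_2pt_mad}, with the choices $\psi_u(x) = \max\{x-t,0\}$ and $\psi_\ell(x) \equiv 1$, so that the denominator is identically $1$ and the objective is exactly $\E[\max\{X-t,0\}]$. First I would check Assumption \ref{ass:psi}: on $\{x<t\}$ the map $\psi_u$ is the constant $0$ and on $\{x\geq t\}$ it is the affine function $x-t$, both concave and non-decreasing, while $\psi_\ell$ is constant and therefore convex and non-increasing on each piece. The only genuine prerequisite for the reduction is then Condition \ref{cond:mad_condition}.

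To verify Condition \ref{cond:mad_condition} I would evaluate the two-point objective $g(v_1,v_2)=w_1\max\{v_1-t,0\}+w_2\max\{v_2-t,0\}$ using $w_2=(\mu-v_1)/(v_2-v_1)$ and $w_1=(v_2-\mu)/(v_2-v_1)$. The only nontrivial regime is $v_1<t<v_2$, where $g=w_2(v_2-t)=(\mu-t)+\frac{(v_2-\mu)(t-v_1)}{v_2-v_1}$; for $v_1\geq t$ one gets the constant $\mu-t$ and for $v_2\leq t$ one gets $0$. A short differentiation shows $g$ is non-decreasing in $v_2$ (fixed $v_1$) and non-increasing in $v_1$ (fixed $v_2$), and the two branches agree at the kink $v_1=t$ (both equal $\mu-t$) and at $v_2=t$ (both equal $0$), so monotonicity survives the kink. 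This is precisely Condition \ref{cond:mad_condition}, and Theorem \ref{thm:reduction_2pt_mad} then reduces the problem to the explicit one-parameter family of MAD two-point distributions from Proposition \ref{prop:two_point_mad}.

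Substituting $w_1(v_1)=d/(2(\mu-v_1))$ and $v_2(v_1)=\mu+d/2+d^2/(4(\mu-v_1)-2d)$ into $g$ collapses the objective, in the relevant regime $v_1<t<v_2$, to the single-variable function
\[
h(v_1)=(\mu-t)+\frac{d}{2}-\frac{d(\mu-t)}{2(\mu-v_1)},\qquad v_1\in(-\infty,\,\mu-d/2).
\]
Since $h'(v_1)=-d(\mu-t)/(2(\mu-v_1)^2)$ has constant sign, $h$ is strictly monotone and attains its optimum at a boundary of the interval, the side chosen by the sign of $\mu-t$. For $\mu-t>0$ it decreases in $v_1$, giving the supremum $\mu-t+d/2$ as $v_1\to-\infty$ (where $w_1\to 0$, $w_2\to 1$, $v_2\to\mu+d/2$); for $\mu-t<0$ it increases in $v_1$, giving the supremum $d/2$ as $v_1\to\mu-d/2$ (where $v_2\to\infty$); and for $\mu=t$ it is identically $d/2$. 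The last case I would also argue directly and distribution-free: since $\E[(X-\mu)^+]-\E[(\mu-X)^+]=\E[X-\mu]=0$ and $\E[(X-\mu)^+]+\E[(\mu-X)^+]=\E|X-\mu|=d$, every feasible $X$ has $\E[(X-\mu)^+]=d/2$.

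I expect the main difficulty to be twofold. First, the kink of $\psi_u$ at $t$ forces a case split when verifying Condition \ref{cond:mad_condition}, and one must confirm both that monotonicity is preserved across the kink and that the non-maximizing regimes ($v_1\geq t$ or $v_2\leq t$) cannot exceed the interior regime. Second, the extremal ``distributions'' in (i) and (ii) are improper limits in which probability mass escapes to $\pm\infty$: as $v_1\to-\infty$ one has $w_1\to 0$ yet $w_1v_1\to-d/2$, and as $v_1\to\mu-d/2$ one has $w_2\to 0$ yet $w_2v_2\to d/2$. I would therefore check that the mean and MAD constraints are preserved along these limits, so that the stated values are genuinely the least upper bounds over $\Pmud$. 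As an independent cross-check of the optimal values, the elementary bounds $\E[(X-t)^+]=(\mu-t)+\E[(t-X)^+]\leq(\mu-t)+\E[(\mu-X)^+]=\mu-t+d/2$ for $t<\mu$ and $\E[(X-t)^+]\leq\E[(X-\mu)^+]=d/2$ for $t>\mu$ recover the two suprema without any reduction.
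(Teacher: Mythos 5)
Your proposal is correct and follows essentially the same route as the paper: verify Condition \ref{cond:mad_condition} for $\psi_u(x)=\max\{x-t,0\}$, $\psi_\ell \equiv 1$, apply Theorem \ref{thm:reduction_2pt_mad}, and then optimize over the one-parameter two-point family by a monotonicity argument --- the paper parametrizes by $v_2$, writing the objective as $\tfrac{d}{2}\bigl(1+\tfrac{\mu-t}{v_2-\mu}\bigr)$, while you parametrize by $v_1$, which is equivalent. Two of your additions go beyond the paper's write-up and are worth keeping: you treat the kink regimes $v_1 \geq t$ and $v_2 \leq t$ explicitly when checking Condition \ref{cond:mad_condition} (the paper's derivative computation silently assumes $v_1 < t < v_2$, which need not hold for all feasible two-point distributions), and your closing ``cross-check'' --- the distribution-free bounds $\E[\max\{X-t,0\}] \leq \mu - t + d/2$ for $t \leq \mu$ and $\E[\max\{X-t,0\}] \leq d/2$ for $t \geq \mu$, combined with the limiting two-point family attaining them --- is in fact a complete elementary proof of the theorem that bypasses the reduction machinery entirely.
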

\begin{proof}
The function $g(v_1, v_2)$ in (\ref{eq:cond_objective}) becomes
\begin{equation}
    g(v_1, v_2) = w_2(v_2 - t) = \frac{(\mu - v_1)(v_2 - t)}{v_2 - v_1}
\end{equation}
and (using the fact that $v_2 > t > v_1$ and $v_2 > \mu > v_1$) we find that
\begin{equation*}
    \frac{\d g}{\d v_1} = -\frac{(v_2 - t)(v_2 - \mu)}{(v_2 - v_1)^2} < 0, \quad \frac{\d g}{\d v_2} = \frac{(t - v_1)(\mu - v_1)}{(v_2 - v_1)^2} > 0.
\end{equation*}
This means that Condition \ref{cond:mad_condition} is satisfied, so we can apply Theorem \ref{thm:reduction_2pt_mad}. We find that
\begin{equation}\label{eq:tomaximize}
    w_2(v_2 - t) = \frac{d}{2}\left(1 + \frac{\mu - t}{v_2 - \mu}\right)
\end{equation}
If $\mu - t > 0$, then this is maximized if $v_2 - \mu$ is minimized: hence if $v_2 \rightarrow \mu + d/2$ and $v_1 \rightarrow -\infty$. The optimal objective in this case is $\mu - t + d/2$. If $\mu - t < 0$, then (\ref{eq:tomaximize}) is maximized if $v_2 - \mu$ is maximized, hence if $v_2 \rightarrow \infty$ and $v_1 \rightarrow \mu - d/2$. The optimal objective is then $d/2$. If $\mu - t = 0$, then (\ref{eq:tomaximize}) is independent of $v_2$, and so every value $v_2$ yields the same objective $d/2$. In fact, it is easy to see that every distribution yields this objective. 
\end{proof}
The upper bound for the expected maximum operator in Theorem \ref{thm:maxoperatormad} also follows from results in \cite[Theorem 3]{bental1972mad}, although in that paper the worst-case distribution is a degenerate three-point distribution. 

\subsection{Numerical results}
We now present some numerical results for the tight bounds, and assess the interplay between the bounds and the level of ambiguity by considering as dispersion measure the $p$th power deviation for various values of $p$. 
Figure \ref{fig:condtailplot2} shows the bounds for the conditional expectation, tail bound and max operator using Corollaries \ref{pthpowercondexpec}, \ref{pthpowertailbound} and \ref{col:max}, respectively.
\begin{figure}[!ht]
\centering
\includegraphics[width=.48\textwidth]{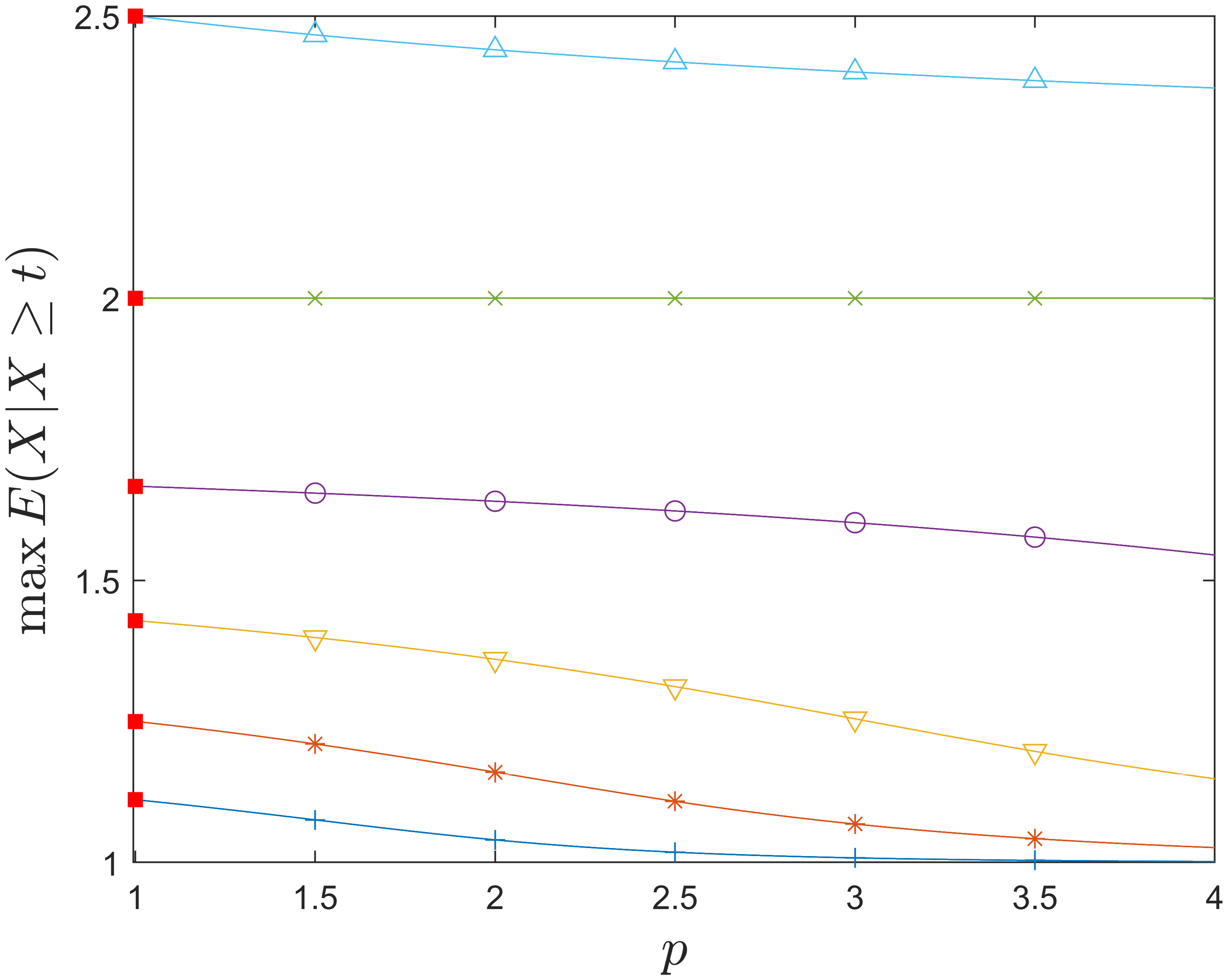}\hfill
\includegraphics[width=.48\textwidth]{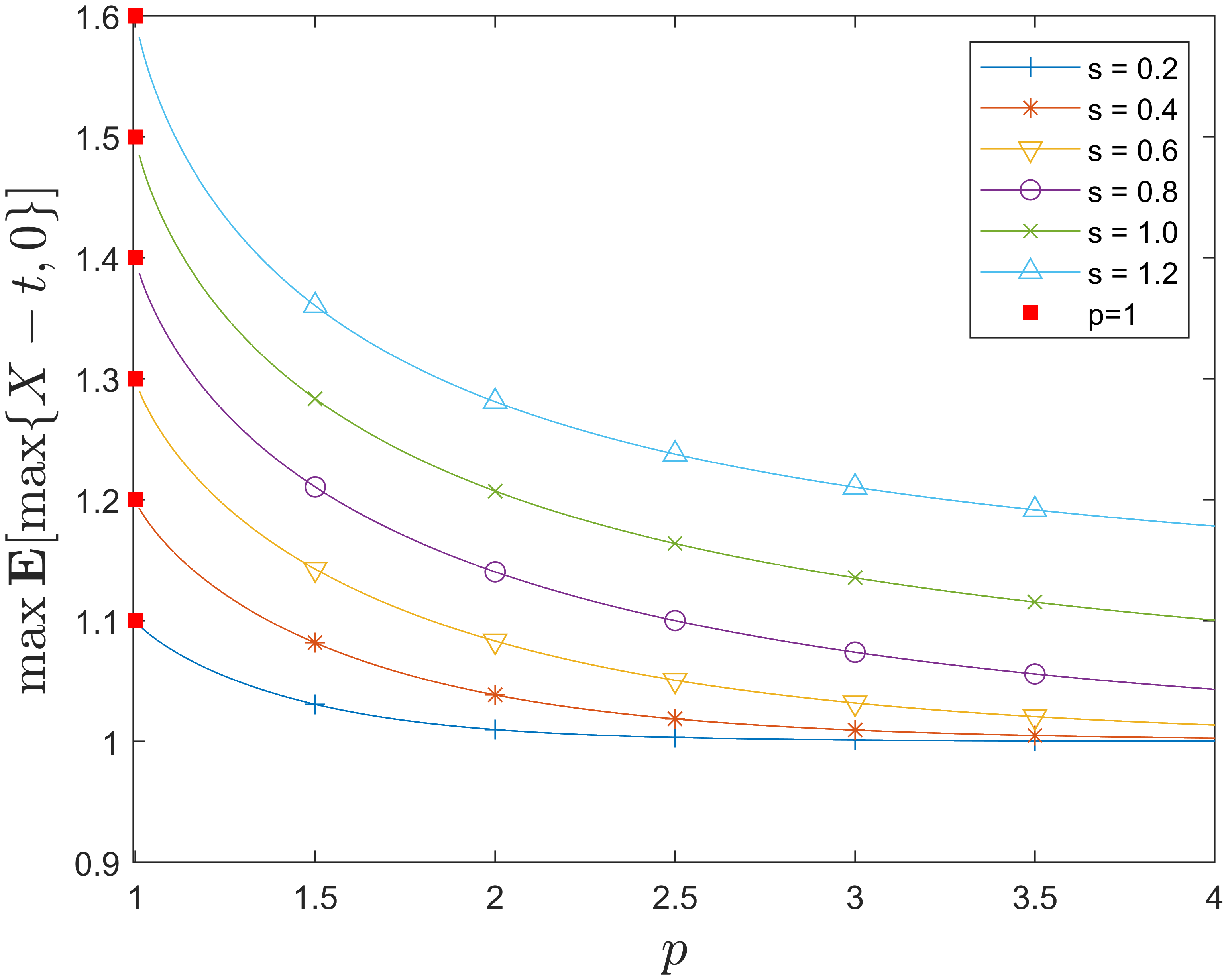}
\caption{The conditional expectation and max operator for a selected set of values $s$, for $\mu = 1$ and $t = 0$.  Note that for $s = 1$ the conditional expectation is constant, since the optimal distribution in this case is independent of the value of $s$. Overall though, the conditional expectation and max operator are increasing in $s$ and decreasing in $s$. 
Also note that for the max operator, the curves for small $s$ converge much faster to 1 as $p$ gets large than for large $s$. }
\label{fig:condtailplot2}
\end{figure}
Observe that for $s = 1$ the conditional expectation and tail bound are both insensitive to $p$, because the optimal distribution
is independent of $p$. For all other values $s$, the conditional expectation decreases with $p$ and increases in $s$. 
To see why, notice that low $p$ and high $s$ correspond with less restrictive ambiguity sets, and hence higher conditional expectation.
Similarly, the tail bound increases with $p$ and decreases with $s$. 

For the max operator, a highly restrictive ambiguity set with $\mu > t$ would mean that $\max\{X - t, 0\}$ is almost always $X - t$, so $\E(\max\{X - t, 0\})\approx \E(X - t) = \mu - t$, which in Figure \ref{fig:condtailplot2} is equal to $1 - 0 = 1$. On the other side, an ambiguity with only the constraint $\E(X) = \mu$ but no constraints on dispersion would mean that one could choose $v_1 = \mu - R$ and $v_2 = \mu + R$ with $R$ extremely large, so that $\E(\max\{X - t, 0\}) = \tfrac{1}{2}(\mu + R)$, which tends to infinity as $R \rightarrow \infty$. Hence in this case the max operator tends to infinity. So again we have that a more restrictive ambiguity set (i.e. a lower $s$ or higher $p$) should imply a lower version of the max operator, which is consistently the case here. Note that there is no $s > 0$ for which the graph is constant, unlike in the other two problems we considered: for any fixed value $s > 0$ the graph converges to 1 if $p$ becomes  large.

\section{Applications}
\label{sec:applications}

We will now leverage the tight bounds obtained in 
Section \ref{sec:fundamental_quantities} for the conditional expectation, the tail bound and the max-operator for performing  distribution-free analysis of the newsvendor model and monopoly pricing.


\subsection{Robust newsvendor}
The newsvendor model is a classical model in inventory management used for determining the optimal order quantity for perishable goods or goods with uncertain demand. The central idea is to balance the cost of holding excess inventory against the cost of stockouts. The objective function is the expected total cost which can contain the max operator of the market demand. The distribution-free problem then minimizes expected cost, considering all demand scenarios in the ambiguity set, with as solution a robust order quantity that protects against the demand uncertainty. 
The newsvendor chooses to stock $q$ items, while the demand follows a stochastic variable $D$. For every item less than the demand the newsvendor faces a penalty $b$, while it faces penalty $h$ for holding any overstocked item. The newsvendor then needs to consider the cost function
$C(q)=b\max\{D-q,0\}+h\max\{q-D,0\}$.
Since $\max\{q-D,0\}=q-D+\max\{D-q,0\}$ we can write 
\begin{align}\label{costnews}
\mathbb{E}C(q)=h(q-\mathbb{E}D)+(b+h)\mathbb{E}(\max\{D-q,0\})
\end{align} 
and the optimal $q=q^*$ thus solves
\begin{align}
\min_{q} \ hq+(b+h)\mathbb{E}(\max\{D-q,0\}).
\end{align}
The newsvendor model thus uses as input the max operator $\mathbb{E}_{\P}(\max\{D-q,0\})$, which is also of central importance in risk management in finance and insurance mathematics where a stop-loss order is an order placed with a broker to buy or sell once the stock $D$ reaches a certain price.
We consider the robust newsvendor, which means that after the newsvendor chooses how many items to stock, nature will choose the worst distribution for $D$ in the ambiguity set
for this $q$. That is
\begin{align}
\min_{q} \ hq+(b+h)\max_{\P \in \Pmus}\mathbb{E}(\max\{D-q,0\}).
\end{align}
Note that the maximization over $\P$ follows from Theorem \ref{thm:max}. For mean-variance, this maximization problem can be solved exactly, and after that the minimization over $q$ can also be done analytically. Scarf \cite{scarf1958min} showed that 
\begin{align}
q^\star=\mu+\frac{\sigma}{2}\Big(\sqrt{\frac{b}{h}}-\sqrt{\frac{h}{b}}\Big),
\end{align}
with the extremal two-point distribution 
\begin{align}
D^{(2)}=\left\{\begin{array}{ll}
\mu-\sigma\sqrt{\frac{h}{b}}&  {\rm w.p.} \  \frac{b}{b+h},\\
\mu+\sigma\sqrt{\frac{b}{h}}& {\rm w.p.} \  \frac{h}{b+h},
  \end{array}\right.
\end{align}
and $\mathbb{E}C(q^\star) = \mu + \sigma\sqrt{bh}$. For $p = 1$ the optimal order quantity is any quantity $q \geq \mu$. For simplicity we can just take $q^\star = \mu$, which we also do in Figure \ref{fig:robustnewsvendor}.  The optimal objective is then
\begin{equation}
    \E C(q^\star) = h\mu + (b + h)s/2.
\end{equation}
We shall now explore how heavy-tailed demand affects the robust order quantity for $p \in (1,2)$. 
This requires careful computations, as the optimization over the unknown distribution $\Prob$ has no closed-form solutions for $p \in (1,2)$, which in turn makes the optimization over $q$ more challenging. We will leverage Theorem \ref{thm:max}, which says we can restrict to the two-point distributions  that are defined implicitly in Corollary \ref{col:max}. For a fixed $p$ we  compute the cost function for several $q$-values to find the optimal order quantity. Figure \ref{fig:robustnewsvendor} shows results for  $\mu = 1, h = 1, b = 10$.

\begin{figure}[!ht]
    \centering
    \includegraphics[width=0.48\textwidth]{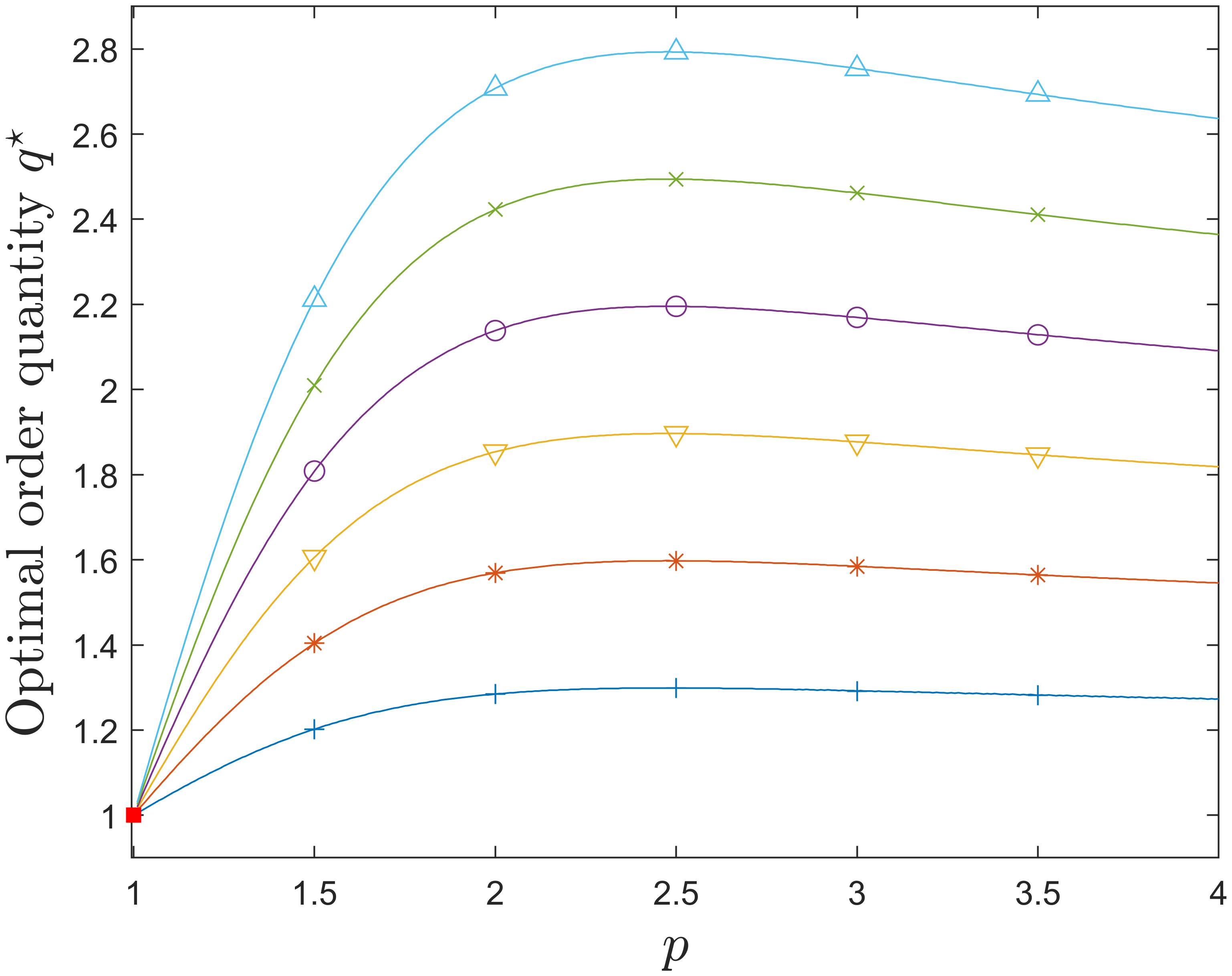}\hfill
    \includegraphics[width=0.48\textwidth]{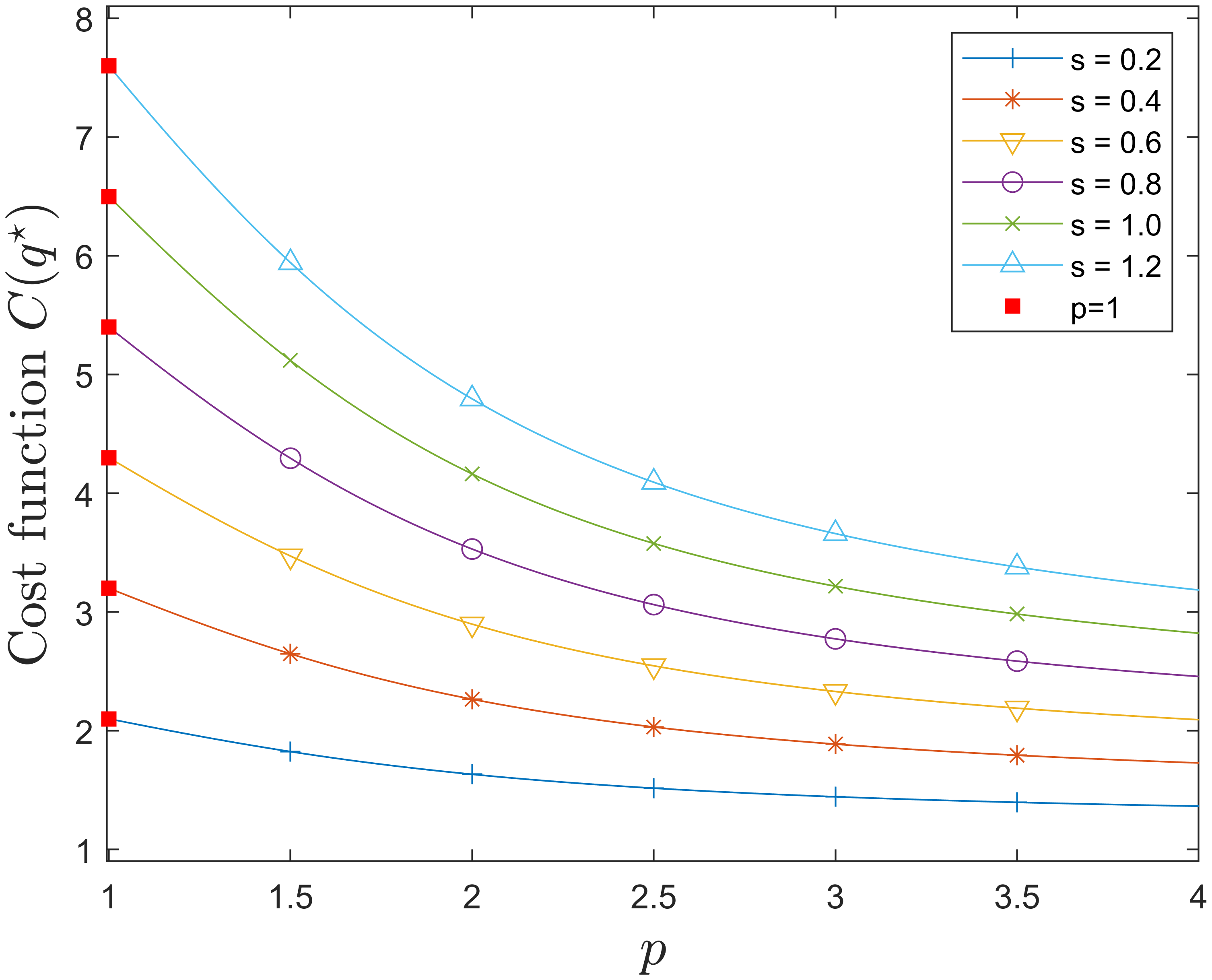}
    \caption{The optimal order quantity for the robust newsvendor, now with $\mu = 1, b = 10, h = 1$. Note that at $\bar{p} \approx 2.5$ there is an extremum in all curves: for $p < \bar{p}$ we have that $q^\star$ increases with $p$, but for $p > \bar{p}$ instead $q^\star$ decreases with $p$. The cost function $C(q^\star)$ is always decreasing in $p$ though. }
    \label{fig:robustnewsvendor}
\end{figure}

Observe that the optimal order quantity  decreases with $s$. This can be understood by arguing that for $b > h$ it is always better to order more than $\mu$ items, since we gain more by selling an extra item than what we lose by having an unsold item. 
This effect becomes more pronounced for higher $s$, which implies that the probability of demand exceeding the mean becomes larger as well.

Arguably more surprising is the dependence between $q^\star$ and $p$. A first thing to notice is that for $p \downarrow 1$,  the optimal order quantity always approaches $\mu$. One might expect a larger $p$ implies less dispersion and hence a lower $q^\star$. But $q^\star$ turns out to be non-monotonic in $p$: the order quantity first increases and then decreases in $p$. This effect is confirmed if we plot $C(q)$ as a function of $q$ for several values $p$, see Figure \ref{fig:robustnewsvendorexperiment}. Let $\bar{p}$ denote the value of $p$ at which $q^\star$ turns from increasing to decreasing.
Based on extensive numerical experiments,
we have seen that $\bar{p}$ depends on the ratio $b/h$; Table~\ref{tablepbar} shows that $\bar{p}$ decreases with $b/h$.
\begin{table}[h]\label{tablepbar}
\begin{center}
    \begin{tabular}{|c||c|c|c|c|c|c|c|c|c|c|}\hline
        $b/h$ & 10 & 15 & 20 & 25 & 30 & 40 & 50 & 60 & 70 & 80 \\ \hline
        $\bar{p}$ & 2.48 & 2.13 & 1.96 & 1.86 & 1.79 & 1.70 & 1.64 & 1.60 & 1.57 & 1.54 \\ \hline
    \end{tabular}
\end{center}
\caption{Dependence between $\bar{p}$ and $b/h$.}
\end{table}
We conclude that allowing \blu{distributions with heavier tails} always increases the  costs $C(q^\star)$, but does not necessarily lead to a smaller order quantity $q^\star$. 
\begin{figure}[!ht]
    \centering
    \includegraphics[width=0.48\textwidth]{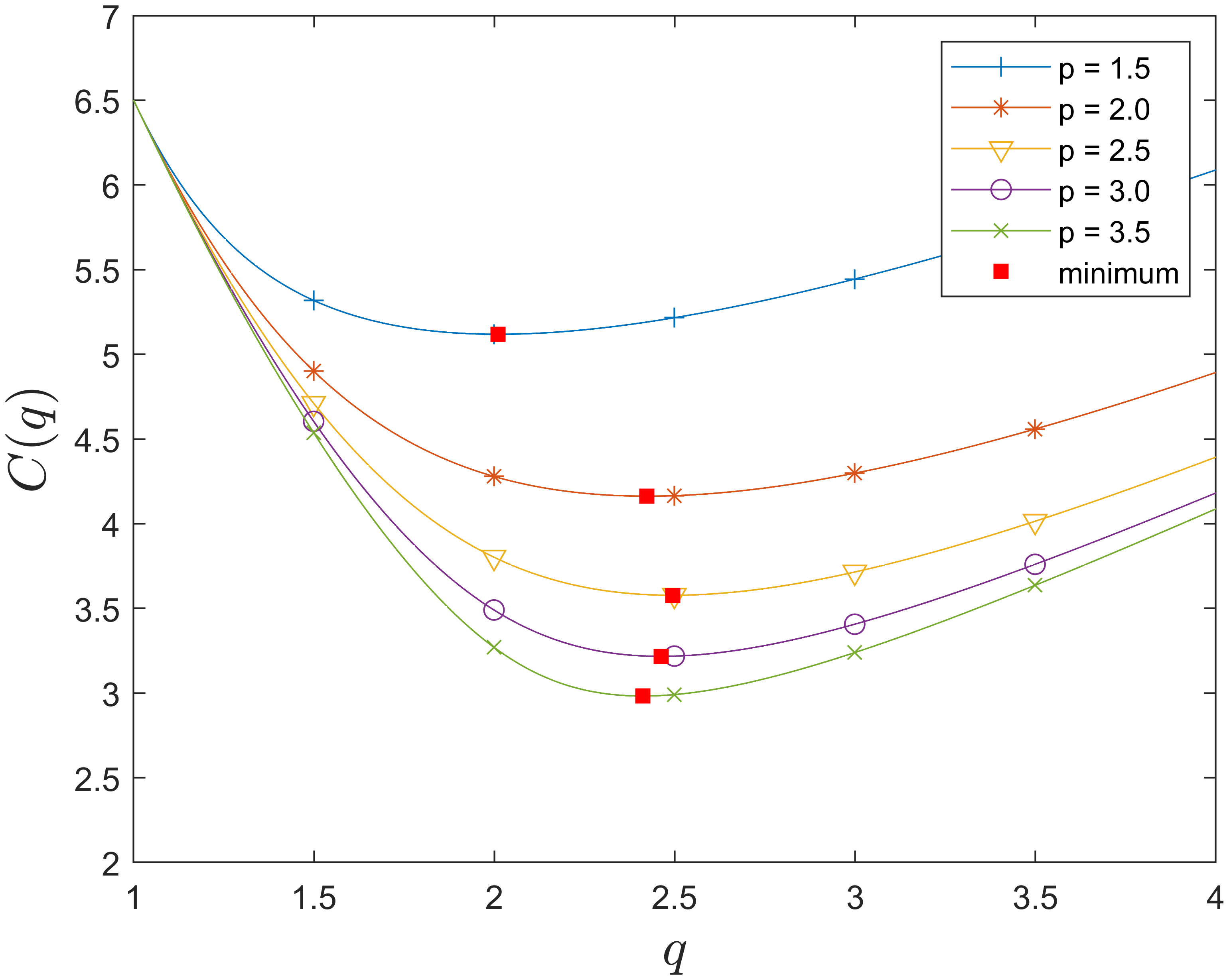}\hfill
    \caption{The function $C(q)$ as a function of $q$ for several values $p$, with the minimum (corresponding to $q^\star$) marked in all curves. It is clear that $q^\star$ is indeed increasing for low $p$ but decreasing for high $p$, confirming the effect we saw in Figure \ref{fig:robustnewsvendor}.}
    \label{fig:robustnewsvendorexperiment}
\end{figure}


\color{black}

\subsection{Robust pricing}
In the monopoly pricing problem, a seller wants to sell an item with demand distribution $\Prob$ for a fixed price $r$. The goal of the seller is to choose a price $r$ that maximizes the expected revenue $\REV(r,\Prob) = r \Prob(D \geq r)$, where the random demand $D \sim \Prob$ is known to the seller. This is a classical problem in the mechanism design literature \cite{riley1983optimal,myerson1981optimal}. We study the problem from a robust perspective, where first the seller has to choose a selling price $r$, after which nature gets to choose a distribution from a given ambiguity set that is worst-case for the price chosen by the seller.

In this case there are various revenue objectives that one can consider; see \cite{azarmicali2013,chen2022distribution,chen2021screening}. We focus on the approximation ratio that was first studied by Giannakopoulos et al.~\cite{giannakopoulos2023robust} for mean-variance ambiguity. 
For any pair $(r, \P)$  define 
$\OPT(\P) = \max_{r'} \REV(r', \P)$ as the optimal revenue the seller can generate for this probability distribution. As a measure for how good the price $r$ chosen by the seller is, we consider the ratio
\begin{equation}\label{eq:ratio}
	\frac{{\rm OPT}(\P)}{{\rm REV}(r, \P)}.
\end{equation}
A small approximation ratio (close to 1) indicates that the robust prices provides a solution that is close to optimal, while a larger ratio implies a less accurate approximation. The seller wishes to minimize the ratio in the face of uncertain market conditions, and hence the optimal price can be found by solving the distribution-free minimax problem
\begin{equation}\label{eq:ratio}
	\min_r \max_{\P \in \mathcal{P}(\mu, s,\varphi)}\frac{{\rm OPT}(\P)}{{\rm REV}(r, \P)}.
\end{equation}
It turns that the analysis of this objective is intricately related to the robust conditional expectation and tail bound results derived in Section \ref{sec:fundamental_quantities}. This is summarized in \blue{Proposition \ref{prop:robustpricing}. The statement in this proposition remains true if the distributions in the ambiguity set are supported on $\R_{\geq 0}$ instead of $\R$. This follows because the results in Sections \ref{sec:app_cond} and \ref{sec:app_tail} still hold for distributions with nonnegative support. This is due to our reduction technique, which preserves nonnegativeness when reducing to two points, and the fact that the worst-case distribution for the unrestricted case has nonnegative support when $t \geq 0$. }

\begin{proposition}\label{prop:robustpricing}
Let $r < \mu$, then
    $$
    \max_{\P \in \mathcal{P}(\mu, s, \varphi)}\frac{\textup{OPT}(\P)}{\textup{REV}(r, \P)} = \max\left\{ \frac{1}{\min_{\P \in \mathcal{P}(\mu, s, \varphi)} \P(X \geq r)}, \frac{\max_{\P \in \mathcal{P}(\mu, s,\varphi)} \E(X | X \geq r)}{r}\right\}.
    $$
        The optimal strategy for nature is to choose the unique two-point distribution supported on $v_1$ and $v_2$ in the ambiguity set, with $v_1  \rightarrow r^-$, so that 
        $$
        \max_{\P \in \mathcal{P}(\mu, s, \varphi)}\frac{\textup{OPT}(\P)}{\textup{REV}(r, \P)} = \max\left\{\frac{\mu-r}{v_2-r},\frac{v_2}{r}\right\}.
        $$
        where $v_2$ satisfies
        \begin{equation*}
        \frac{v_2 - \mu}{v_2 - r}\varphi(r) + \frac{\mu - r}{v_2 - r}\varphi(v_2) = s.
        \label{eq:v2_conditional}
    \end{equation*}
\end{proposition}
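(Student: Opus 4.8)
The plan is to prove the two displayed equalities by establishing a matching upper and lower bound, using the elementary observation that the fixed price $r$ splits the optimization defining $\OPT(\P)$ into a ``low-price'' and a ``high-price'' regime, which is exactly what decouples the numerator from the denominator.

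For the upper bound I would fix an arbitrary feasible $\P$ and write
$$\OPT(\P) = \max\left\{\, \sup_{r' \le r} r'\P(X \ge r'),\ \sup_{r' > r} r'\P(X \ge r') \,\right\}.$$
For the low-price part I bound $r'\P(X\ge r') \le r' \le r$, so dividing by $\REV(r,\P) = r\,\P(X \ge r)$ gives at most $1/\P(X\ge r) \le 1/\min_{\P}\P(X\ge r)$. For the high-price part I use (for a nonnegative price $r \ge 0$) the pointwise chain $r'\P(X\ge r') \le \E(X\,\mathbb{I}_{X \ge r'}) \le \E(X\,\mathbb{I}_{X\ge r}) = \P(X\ge r)\,\E(X\mid X\ge r)$, where the first inequality holds because $X \ge r'$ on the conditioning event and the second because $X \ge r \ge 0$ on $\{r \le X < r'\}$; dividing by $\REV(r,\P)$ bounds this piece by $\E(X\mid X\ge r)/r \le \max_{\P}\E(X\mid X\ge r)/r$. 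Taking the maximum of the two pieces gives the asserted upper bound for every $\P$, and hence for the supremum over the ambiguity set. (Here $\P(X\ge r) > 0$ always, since $r < \mu$ forbids $X < r$ almost surely.)

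For the lower bound I would exhibit the limiting two-point distribution with $v_1 \uparrow r$ and $v_2$ the unique solution of the displayed equation, which is feasible by Proposition \ref{prop:two_point}. For it, $\REV(r,\P) = r\,w_2$ with $w_2 = (\mu - v_1)/(v_2 - v_1) \to (\mu - r)/(v_2 - r)$, while $\OPT(\P) = \max\{v_1,\, v_2 w_2\}$ (pricing at $v_1$ captures the whole mass, pricing at $v_2$ captures only $w_2$). Sending $v_1 \uparrow r$, the ratio tends to $\max\{1/w_2,\ v_2/r\} = \max\{(v_2-r)/(\mu-r),\ v_2/r\}$. The decisive structural point is that this single $v_2$ is simultaneously the worst case for the tail bound (Theorem \ref{thm:tail}) and for the conditional expectation (Theorem \ref{thm:cond}) at threshold $t = r$, since both are governed by the identical equation; therefore $1/w_2 = 1/\min_{\P}\P(X\ge r)$ and $v_2 = \max_{\P}\E(X\mid X\ge r)$, so this one distribution attains exactly $\max\{1/\min_{\P}\P(X\ge r),\ \max_{\P}\E(X\mid X\ge r)/r\}$.

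Combining the two bounds yields the equality, and substituting the explicit extremal values from Theorems \ref{thm:tail} and \ref{thm:cond} produces the closed forms in terms of $v_2$. I expect the main obstacle to be precisely the coupling of numerator and denominator through the same unknown distribution; the split of $\OPT(\P)$ at $r$ is the device that resolves it, and the nontrivial fact that makes the two regimes align is that the tail-bound and conditional-expectation subproblems share the same extremal $v_2$, so a single nature-distribution is worst-case for both at once. Minor technical care is needed because the optimum is a limiting distribution as $v_1 \uparrow r$ (so the statement is a supremum approached along such sequences rather than an attained maximum), and because the high-price bound relies on $r \ge 0$, as is natural for a price.
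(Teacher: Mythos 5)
Your proof is correct and follows essentially the same route as the paper: the same low-price/high-price decomposition of $\OPT(\P)$ (the paper phrases it as a case split on the optimal price $r^*$ for the fixed $\P$), and the same limiting two-point distribution with $v_1 \uparrow r$, exploiting that its single $v_2$ simultaneously solves the tail-bound and conditional-expectation subproblems of Theorems \ref{thm:tail} and \ref{thm:cond}. The only differences are to your credit: you prove the high-price inequality $r'\P(X \geq r') \leq \P(X \geq r)\,\E(X \mid X \geq r)$ directly via the Markov-type chain where the paper cites it from Giannakopoulos et al.~\cite{giannakopoulos2023robust}, and you explicitly flag the requirement $r \geq 0$ (needed for that chain when distributions are supported on all of $\R$), which the paper leaves tacit.
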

\begin{proof}
    We follow the proof outline of \cite{giannakopoulos2023robust} who treat the case of mean-variance ambiguity. Let $r^* = r^*(\P)$ be the optimal selling price, given a distribution $\P$, so that $\OPT(\P) = r^*\P(X \geq r^*)$. 
    We next fix $\P$, and consider two cases.

    {Case 1: $r^* \leq r$.} Then
    $$
    \frac{\OPT(\P)}{\REV(r, \P)} = \frac{ r^*\P(X \geq r^*)}{ r\P(X \geq r)} \leq \frac{1}{\P(X \geq r)} \leq \frac{1}{\min_{\P \in \mathcal{P}(\mu, s,\varphi)} \P(X \geq r)}.
    $$
    In the first inequality, we use that $r^*/r \leq 1$ and $\P(X \geq r^*) \leq 1$.

    {Case 2: $r^* > r$.} Following the same line of reasoning as in \cite[Eq. 6]{giannakopoulos2023robust}, we find that in this case
    $$
    \frac{ r^*\P(X \geq r^*)}{\P(X \geq r)} \leq \E(X | X \geq r),
    $$
    and so
    $$
    \frac{\OPT(\P)}{\REV(r, \P)} \leq \frac{ \E(X | X \geq r)}{r} \leq \frac{ \max_{\P \in \mathcal{P}(\mu, s,\varphi)}\E(X | X \geq r)}{r}.
    $$
    Taking these cases together, and observing that $\P$ was chosen arbitrarily, we find
    $$
     \max_{\P \in \mathcal{P}(\mu, s)}\frac{\OPT(\P)}{\REV(r, \P)} \leq \max\left\{ \frac{1}{\min_{\P \in \mathcal{P}(\mu, s,\varphi)} \P(X \geq r)}, \frac{\max_{\P \in \mathcal{P}(\mu, s, \varphi)} \E(X | X \geq r)}{r}\right\}.
    $$
    We next argue that for the two-point distribution supported on $\{v_1,v_2\}$, with $v_1 \rightarrow r^-$, the quantity on the right is in fact attained.    
    Let $\P^*$ denote the distribution where $v_1 \rightarrow r^-$. Then $\OPT(\P^*) = \max\{v_1,w_2v_2\} = \max\{r,w_2v_2\}$, and so
\begin{equation}\label{eq:prop41intermediate}
        \frac{\OPT(\P^*)}{\REV(r, \P^*)} = \frac{\max\{r,w_2v_2\}}{rw_2} = \max\left\{\frac{1}{w_2},\frac{v_2}{r}\right\}.
    \end{equation}
    We know from other results that $w_2 = \min_{\P \in \mathcal{P}(\mu, s,\varphi)} \P(X \geq r)$ and $v_2 = \max_{\P \in \mathcal{P}(\mu, s,\varphi)} \E(X | X \geq r)$ when $v_1 \rightarrow r^-$. That is, the two-point distribution that sends its left support point towards $r$ solves both the problems of minimizing the tail bound, as well as maximizing the condition expectation. Substituting $w_2 = (\mu - v_1)/(v_2-v_1)$ with $v_1 \rightarrow r^-$ then gives the desired result.
\end{proof}

\color{black}
Using Proposition \ref{prop:robustpricing} (or specifically the intermediate result (\ref{eq:prop41intermediate}), the robust pricing problem (\ref{eq:ratio}) becomes
\begin{equation}\label{eq:ratio2}
    \min_r \max_{\P \in \mathcal{P}(\mu, s,\varphi)}\left\{\frac{1}{w_2}, \frac{v_2}{r}\right\}.
\end{equation}
The question is now for which $r$ this minimum is attained. It is clear that $v_2$ is increasing in $v_1$ (and hence in $r$), hence $w_2$ is decreasing in $r$ and $1/w_2$ is increasing in $r$. However $v_2/r$ converges to infinity if $r \downarrow 0$, and also if $v_2 \uparrow \mu$. The minimum over all $r$ is therefore either attained at the intersection point of the curves $1/w_2$ and $v_2/r$, or at the lowest point of $v_2/r$ for $0 < r < \mu$. 

The mean-variance case was analyzed in \cite{giannakopoulos2023robust}. 
\begin{corollary}[\cite{giannakopoulos2023robust}]
    For the mean-variance ambiguity set, the optimal value $r$ is always at the intersection point of $1/w_2$ and $v_2/r$. 
\end{corollary}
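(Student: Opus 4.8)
The plan is to substitute the explicit mean-variance closed forms into the two competing curves and show that at the minimizer of $v_2/r$ the curve $1/w_2$ lies strictly above it; this single fact forces the overall minimum of the max to sit at the crossing point rather than at the bottom of $v_2/r$, which is exactly the dichotomy stated just before the corollary.

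First I would record the closed forms. Specializing the mean–variance conditional-expectation bound (the Mallows–Richter result) and the Cantelli-type tail bound to the optimal two-point distribution with $v_1 \to r^-$ gives $v_2 = \mu + \sigma^2/(\mu - r)$ and $w_2 = (\mu - r)^2/\big((\mu - r)^2 + \sigma^2\big)$. Writing $A(r) := 1/w_2 = 1 + \sigma^2/(\mu - r)^2$ and $B(r) := v_2/r$, I would note two elementary monotonicity facts on $(0,\mu)$: $A$ is strictly increasing, since $A'(r) = 2\sigma^2/(\mu-r)^3 > 0$, while $B$ is U-shaped with $B \to \infty$ both as $r \downarrow 0$ and as $r \uparrow \mu$ (the latter because $v_2 \to \infty$ there, by Proposition \ref{prop:two_point}).

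Next I would locate the bottom of the U. Setting $B'(r)=0$ reduces to $\mu(\mu-r)^2 = \sigma^2(2r-\mu)$; since the left side decreases and the right side increases in $r$, with the left side dominating at $r=\mu/2$ and the right side dominating as $r\uparrow\mu$, there is a unique minimizer $r_B^*$, lying in $(\mu/2,\mu)$. The crucial step — and the main obstacle — is the algebraic identity at this point. Substituting the critical-point condition in the form $\sigma^2 = \mu\delta^2/(\mu-2\delta)$, with $\delta := \mu - r_B^*$, into $A-B$ and reducing over the common denominator $(\mu-2\delta)(\mu-\delta)$, I expect both the numerator and the denominator to collapse to $(\mu-\delta)(\mu-2\delta)$, yielding the clean identity $A(r_B^*) - B(r_B^*) = 1$. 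In particular $A(r_B^*) > B(r_B^*)$. This cancellation is the one genuinely delicate computation; everything hinges on the bookkeeping of that denominator.

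Finally I would conclude by a sandwich argument. Since $B > A$ as $r \downarrow 0$ while $A > B$ at $r_B^*$, continuity produces a crossing $r^* \in (0,r_B^*)$ with $A(r^*)=B(r^*)$. Setting $M(r):=\max\{A(r),B(r)\}$, for every $r \ge r^*$ one has $M(r) \ge A(r) \ge A(r^*)$ because $A$ is increasing, and for every $r < r^*$ one has $M(r) \ge B(r) \ge B(r^*)$ because $B$ is decreasing on $(0,r_B^*) \supset (0,r^*)$. As $A(r^*)=B(r^*)=M(r^*)$, both bounds give $M(r) \ge M(r^*)$, so the minimum of $M$ is attained exactly at the intersection $r^*$. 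The strict inequality $A(r_B^*) > B(r_B^*)$ simultaneously rules out the bottom of $v_2/r$ as a candidate minimizer, completing the proof.
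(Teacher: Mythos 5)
Your proof is correct, and I verified its one delicate step: writing $\delta = \mu - r_B^*$, the stationarity condition $\mu(\mu-r_B^*)^2 = \sigma^2(2r_B^*-\mu)$ gives $\sigma^2 = \mu\delta^2/(\mu-2\delta)$, hence $v_2 = \mu(\mu-\delta)/(\mu-2\delta)$, so $B(r_B^*) = \mu/(\mu-2\delta)$ while $A(r_B^*) = 1 + \mu/(\mu-2\delta)$, confirming the identity $A(r_B^*)-B(r_B^*)=1$. Note, however, that the paper does not prove this corollary at all: it imports it from Giannakopoulos et al.~\cite{giannakopoulos2023robust}, and its closest in-house analogue is the mean-MAD theorem that follows, whose proof computes both candidates in closed form ($\rho_1$ by solving $1/w_2 = v_2/r$, $\rho_2$ by minimizing $v_2/r$) and takes for granted that the smaller of the two is optimal. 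Your route is genuinely different and, in one respect, more self-contained: instead of solving explicitly for the intersection point (for variance this is a cubic equation, and its solution is exactly the closed-form robust price of the cited work), you only evaluate the gap $A-B$ at the minimizer of $B$; the identity $A-B=1$ there places the crossing on the decreasing branch of $B$, and your monotonicity sandwich ($A$ increasing everywhere, $B$ decreasing left of $r_B^*$) shows that crossing minimizes $\max\{A,B\}$. That sandwich step also supplies a proof of the dichotomy (``intersection point versus bottom of $v_2/r$'') which the paper merely asserts in the discussion preceding the corollary. What you give up relative to the paper's and the cited work's computational approach is the explicit formula for the optimal price and the optimal ratio, which come out as by-products there; what you gain is a shorter argument that never has to solve for the intersection point at all.
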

This raises the question whether the intersection point is also always optimal for other ambiguity sets. This turns out not to be the case. To see this, consider the mean-MAD ambiguity set, for which the following result holds:\begin{theorem}
    Let $\mathcal{P}(\mu, \delta\mu)$ be the ambiguity set of all distributions with mean $\mu$ and MAD $\delta\mu$, where $0 \leq \delta < 2$. Then the optimal robust selling price $\rho = r/\mu$ solving \eqref{eq:ratio} is
    \begin{equation}
    \rho = \begin{cases}
    \frac{4 + \delta - \sqrt{\delta^2 + 8\delta}}{4} & \mbox{ if } 0 \leq \delta \leq 2(\sqrt{5} - 2), \\
    \frac{2 - \delta}{2 + \delta} & \mbox{ if } 2(\sqrt{5} - 2) \leq \delta < 2,     
        \end{cases}
    \end{equation}
    and the resulting ratio is
    \begin{equation}
        \min_{\rho}\max_{\P \in \mathcal{P}(\mu, \delta\mu)}\frac{{\rm OPT}(\P)}{{\rm REV}(\rho\mu, \P)} = 1 + \begin{cases}
            \frac{2\delta}{4\sqrt{\delta^2 + 8\delta} - 3\delta} & \mbox{ if } 0 \leq \delta \leq 2(\sqrt{5} - 2), \\
            \frac{8\delta}{(2 - \delta)^2} & \mbox{ if } 2(\sqrt{5} - 2) \leq \delta < 2.
        \end{cases}
    \end{equation}
\end{theorem}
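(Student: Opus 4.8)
The plan is to push everything through Proposition~\ref{prop:robustpricing}, which already tells us that nature's optimal response is the two–point distribution sending its left atom $v_1 \to r^-$, so that the minimax \eqref{eq:ratio2} reduces to the purely one–dimensional problem $\min_r \max\{1/w_2,\,v_2/r\}$.

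\textbf{Step 1 (two explicit curves).} First I would substitute the closed MAD forms of \eqref{eq:madsolution} with $v_1=r$ and $d=\delta\mu$, and normalize by $\mu$ via $\rho=r/\mu$. On the admissible interval $\rho\in(0,1-\delta/2)$ (where $w_2>0$) this produces
\begin{equation*}
A(\rho):=\frac{1}{w_2}=\frac{2(1-\rho)}{2(1-\rho)-\delta}, \qquad B(\rho):=\frac{v_2}{r}=\frac{2-(2+\delta)\rho}{\rho\,(2-\delta-2\rho)},
\end{equation*}
so the task becomes $\min_\rho \max\{A(\rho),B(\rho)\}$.

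\textbf{Step 2 (shapes and critical points).} Next I would record that $A$ is strictly increasing with $A\to\infty$ as $\rho\uparrow 1-\delta/2$, whereas $B$ is unimodal, blowing up at both endpoints of the interval. Setting $B'(\rho)=0$ and clearing denominators collapses to $(2+\delta)\rho^2-4\rho+(2-\delta)=0$, whose admissible root is the trough $\rho_B^\star=\tfrac{2-\delta}{2+\delta}$ (the companion root $\rho=1$ is out of range). Solving the intersection $A(\rho)=B(\rho)$, the common factor $2-\delta-2\rho$ cancels and one is left with $2\rho^2-(4+\delta)\rho+2=0$, giving $\rho_0=\tfrac{4+\delta-\sqrt{\delta^2+8\delta}}{4}$.

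\textbf{Step 3 (case split).} Because $A$ is increasing and $B$ is U–shaped, $\min_\rho\max\{A,B\}$ is attained either where $A$ meets the \emph{decreasing} branch of $B$ (value $A(\rho_0)$), or at the trough $\rho_B^\star$ provided $A\le B$ there (value $B(\rho_B^\star)$). The switch between the two regimes happens exactly when $\rho_0=\rho_B^\star$; equating the two roots, squaring once, and simplifying reduces to $\delta^2+8\delta-4=0$, i.e.\ $\delta=2(\sqrt5-2)$, the stated breakpoint. Finally, substituting $\rho_0$ into $A$ and $\rho_B^\star$ into $B$, and using $\frac{R-\delta}{R-3\delta}=1+\frac{2\delta}{R-3\delta}$ with $R=\sqrt{\delta^2+8\delta}$ together with $\frac{(2+\delta)^2}{(2-\delta)^2}=1+\frac{8\delta}{(2-\delta)^2}$, yields the two displayed branches of both the optimal price $\rho$ and the optimal ratio.

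\textbf{Main obstacle.} The quadratics and their roots are routine. The genuinely delicate point is Step 3: establishing the \emph{global} minimax structure rather than just the critical points. Concretely, I must verify that $B$ is truly unimodal, that in each quadratic the admissible root is the selected one, and — the crux — that the optimum migrates from the intersection $\rho_0$ to the trough $\rho_B^\star$ precisely as $\rho_0$ overtakes $\rho_B^\star$. This amounts to tracking the sign of $A-B$ along each branch, equivalently checking that $A(\rho_B^\star)\le B(\rho_B^\star)$ exactly for $\delta\ge 2(\sqrt5-2)$, which is what pins the breakpoint and is not implied merely by locating the stationary points.
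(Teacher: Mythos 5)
Your route is the same as the paper's: the paper likewise feeds Proposition \ref{prop:robustpricing} into \eqref{eq:ratio2}, writes down the same two mean-MAD curves $1/w_2$ and $v_2/r$, and compares the same two candidates --- the intersection point (the paper's $\rho_1$, your $\rho_0$) and the minimizer of $v_2/r$ (the paper's $\rho_2$, your $\rho_B^\star$) --- with the regime switch at $\delta = 2(\sqrt{5}-2)$. Your Step 3 is in fact \emph{more} careful than the paper's proof: the paper only asserts that whichever of $\rho_1,\rho_2$ is smaller is optimal, whereas you justify the global minimax structure by checking that $A(\rho_B^\star)\le B(\rho_B^\star)$ holds exactly when $4(2-\delta)\le (2+\delta)^2$, i.e.\ when $\delta^2+8\delta-4\ge 0$, which is precisely the argument the paper leaves implicit.

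There is, however, one genuine problem, and it lies not in your computations but in your closing claim. Substituting $\rho_0$ into $A$ gives, as you correctly state, $A(\rho_0)=\frac{R-\delta}{R-3\delta}=1+\frac{2\delta}{R-3\delta}$ with $R=\sqrt{\delta^2+8\delta}$; but the theorem's displayed first branch is $1+\frac{2\delta}{4\sqrt{\delta^2+8\delta}-3\delta}$, a different quantity, so your computation does \emph{not} ``yield the displayed branch.'' The discrepancy resolves in your favor: at $\delta=0.1$ one has $R=0.9$ and $\rho_0=0.8$, and a direct evaluation gives $A(0.8)=B(0.8)=4/3$, matching your $1+\frac{0.2}{0.6}=4/3$ but not the displayed $1+\frac{0.2}{3.3}\approx 1.06$; moreover, your two branches are continuous at the breakpoint $\delta=2(\sqrt{5}-2)$ (where $R=2$ and both equal $\tfrac{3+\sqrt{5}}{2}$), while the displayed formula is discontinuous there. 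So the stated first-branch ratio in the theorem contains an error (the stray factor $4$), your derivation is the correct one, and your write-up should flag this contradiction explicitly instead of asserting agreement with the display.
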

\begin{proof} For the mean-MAD ambiguity set, it is known that
$$    \frac{1}{w_2} = \frac{2(\mu - r)}{2(\mu - r) - d}, \text{ and }
    \frac{v_2}{r}  = \frac{2\mu(\mu - r) - dr}{r(2(\mu - r) - d)}.
    $$
The optimal $r$ is either at the intersection point $r_1$ of $1/w_2$ and $v_2/r$, or at the value $r_2$ that minimizes $v_2/r$. We can compute $r_1$ by solving $1/w_2 = v_2/r$ and $r_2$ by minimizing $v_2/r$:
$$
    r_1  = \mu + \frac{1}{4}d - \frac{1}{4}\sqrt{d^2 + 8\mu d}, \text{ and }
    r_2  = \frac{2\mu^2 - \mu\delta}{2\mu + \delta},
$$
or, with the scale-free $\rho = r/\mu$ and $\delta = d/\mu$
$$
    \rho_1  = \frac{4 + \delta - \sqrt{\delta^2 + 8\delta}}{4}, \text{ and }
    \rho_2  = \frac{2 - \delta}{2 + \delta},
$$
neither of which is always greater than the other. For $\delta < 2(\sqrt{5} - 2)$ we have $\rho_1 < \rho_2$ (hence $\rho_1$ is optimal), but if $\delta > 2(\sqrt{5} - 2)$ we have $\rho_2 < \rho_1$ (hence $\rho_2$ is optimal). The resulting optimal ratio can be directly computed from this. \end{proof}
The above results for both mean-variance and mean-MAD ambiguity  are visualized in Figure~\ref{fig:ratiofigures}. For mean-MAD ambiguity, the intersection point marks the transition between the regime where $\rho_1$ resp. $\rho_2$ is optimal. 
\begin{figure}[!ht]
    \centering
    \includegraphics[width=0.48\textwidth]{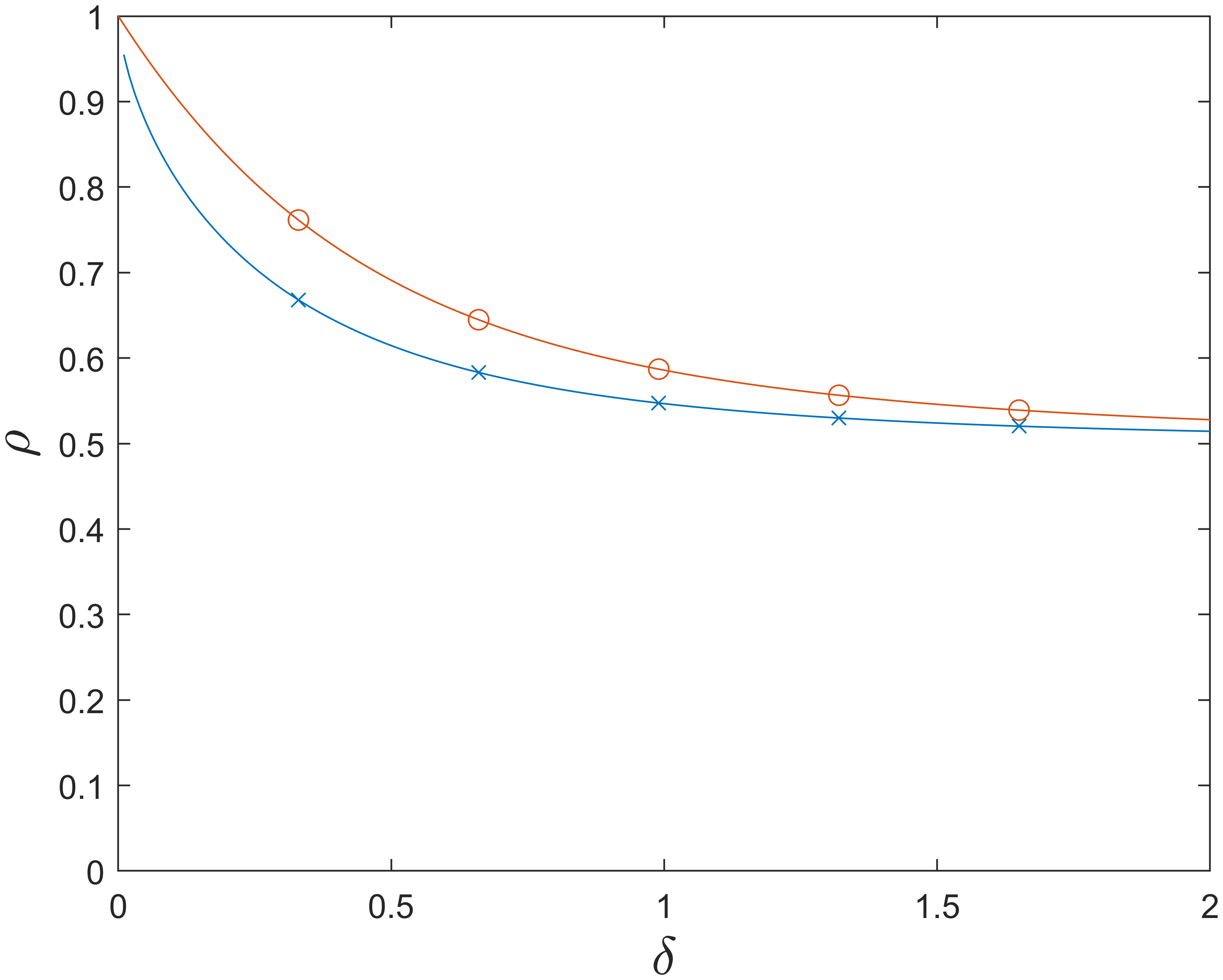}\hfill
    \includegraphics[width=0.48\textwidth]{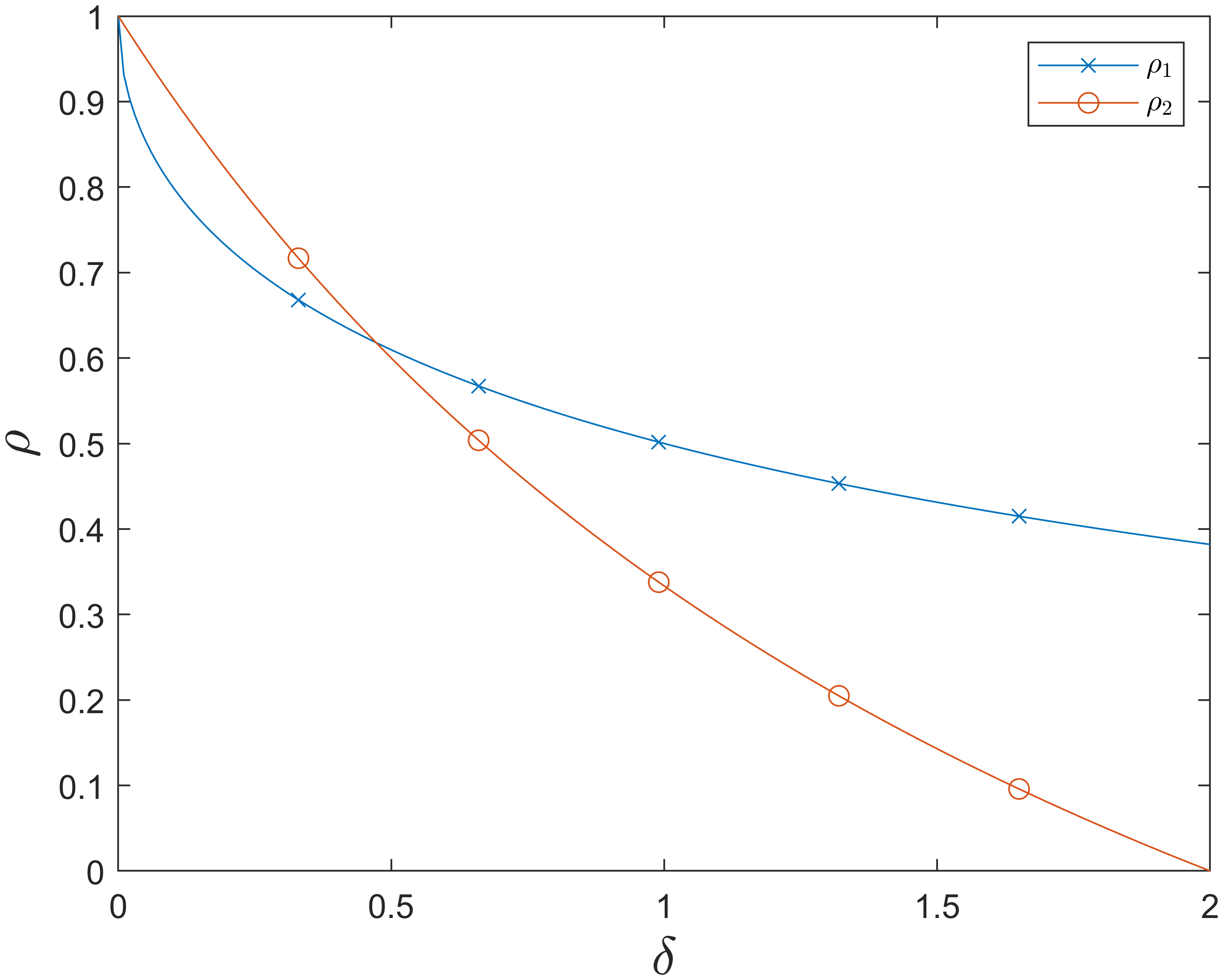}
    \caption{The two candidates $\rho_1$ (intersection point) and $\rho_2$ (minimum of $v_2/r$) visualized for both mean-variance ambiguity (left) and mean-MAD ambiguity (right). For mean-variance, $\rho_1$ is always smaller and hence optimal. However, for mean-MAD, the $\rho_1$ is optimal for $\delta < 2(\sqrt{5} - 2)$, while $\rho_2$ is optimal otherwise.}
    \label{fig:ratiofigures}
\end{figure}
But what happens then for an ambiguity set with the $p$th power deviation constraint, with $1 < p < 2$? Although in this case we cannot compute in closed form the exact values $\rho_1$ and $\rho_2$, we can evaluate these values numerically. Based on extensive numerical experiments, we conclude that for any $p < 2$ there is a transition value $t(p)$, such that if $\delta < t(p)$ then $\rho_1$ is optimal and if $\delta > t(p)$ we have that $\rho_2$ is optimal. If $p \uparrow 2$ then we found $t(p) \uparrow \infty$, matching the fact that for $p = 2$ there is no transition anymore. Taken together, this leads to the following conjecture:
\begin{conjecture}\label{con:ratio}
Let $\varphi(x) = |x - \mu|^p$ for $p > 1$ and consider the dispersion constraint $\E(|x - \mu|^p) = s^p$. For the optimal ratio, there are two candidates, namely the intersection point $\rho_1$ where $1/w_2 = v_2/r$ and the minimum $\rho_2$ of $v_2/r$:
\begin{enumerate}
    \item[{\rm (i)}] If $1\leq p < 2$, then there is a finite transition value $t(p)$, such that if $\delta < t(p)$ then $\rho_1 < \rho_2$ (i.e. $\rho_1$ is optimal), while if $\delta > t(p)$ then $\rho_2 < \rho_1$ (i.e. $\rho_2$ is optimal). 
    \item[{\rm (ii)}]  If $p \geq 2$ then 
    $\rho_1 < \rho_2$ for any $\delta$ (i.e. $\rho_1$ is always optimal).
\end{enumerate}
\end{conjecture}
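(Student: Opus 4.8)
The plan is to collapse the entire conjecture onto the sign of a single explicit one‑variable expression. Throughout I normalise $\mu=1$ and write $\rho=r/\mu$, $c=1-\rho$ and $\delta=s/\mu$. Building on the discussion preceding the conjecture, the inner maximum equals $\max\{1/w_2,\,v_2/r\}$, where $h_1(\rho):=1/w_2$ is strictly increasing and $h_2(\rho):=v_2/\rho$ tends to $\infty$ as $\rho\downarrow 0$ and as $\rho\uparrow 1$, so the outer minimum is attained either at the crossing $\rho_1$ of $h_1,h_2$ or at the minimiser $\rho_2$ of $h_2$. Solving $1/w_2=v_2/\rho$ gives the algebraic relation $v_2=\rho^2/(2\rho-1)$, which forces $\rho_1\in(1/2,1)$ and makes the crossing unique (an increasing $v_2(\rho)$ meets the decreasing $\rho^2/(2\rho-1)$). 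From the crossing structure one reads off the clean dichotomy: for $\rho<\rho_1$ one has $h_2>h_1$, for $\rho>\rho_1$ one has $h_1>h_2$ with $h_1$ increasing, so $\rho_1$ is optimal iff $\rho_1\le\rho_2$, equivalently iff $h_2$ is non‑increasing at $\rho_1$, i.e. iff $h_2'(\rho_1)\le 0$.

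I then make $h_2'(\rho_1)$ explicit. At the crossing, $v_2/\rho_1=\rho_1/(2\rho_1-1)=1+u$ where $u:=c/(1-2c)$, so $\mathrm{sign}\,h_2'(\rho_1)=\mathrm{sign}\big(v_2'(\rho_1)-(1+u)\big)$. I obtain $v_2'$ by implicitly differentiating the defining equation for $v_2$ from Corollary~\ref{pthpowercondexpec}, written with $b=v_2-1$ as $(1-\rho)b^p+[(1-\rho)^p-\delta^p]b-(1-\rho)\delta^p=0$. Using this relation to eliminate $\delta^p$ and the crossing relation $b=c^2/(1-2c)$, the numerator of $v_2'(\rho_1)-(1+u)$ turns out to be homogeneous of degree $p+1$ in $(b,c)$, so dividing by $c^{p+1}$ leaves a function of $u$ alone. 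The outcome is that $\rho_1$ is optimal iff $\Phi(u,p)\le 0$, where
\[
\Phi(u,p)=(2-p)u^{p+1}-(2p-1)u^{p}-pu^{p-1}+pu^{2}+(p-2)u-1 .
\]
For bookkeeping I also record the explicit map between the crossing location and the dispersion, namely $\delta_1(c)^p=\dfrac{c^{p+1}\big[c^{p-1}+(1-2c)^{p-1}\big]}{(1-c)(1-2c)^{p-1}}$.

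Several checks confirm the reduction and pin down the boundary behaviour. One computes $\Phi(u,2)=-(u+1)^2<0$, which recovers that $\rho_1$ is always optimal under variance, matching the cited corollary of Giannakopoulos et al. Moreover $\Phi(u,1)=2(u^2-u-1)$, whose positive root $u=(1+\sqrt5)/2$ maps, via $c=u/(1+2u)$ and $\delta_1(c)$, back to $\delta=2(\sqrt5-2)$, exactly the mean–MAD transition proved earlier; this is a strong consistency test. Since every $u$‑power in $\Phi$ carries a positive exponent, $\Phi(0^+,p)=-1<0$ for all $p$, so $\rho_1$ is optimal for small $\delta$. For $1\le p<2$ the leading coefficient $2-p>0$ gives $\Phi(u,p)\to+\infty$, so by the intermediate value theorem the sign flips at some $u^\ast(p)$; composing the two strictly increasing bijections $c\mapsto u$ and $c\mapsto\delta_1(c)$ of $(0,1/2)$ onto $(0,\infty)$ translates this into a finite threshold $t(p)$ in $\delta$, and because $\Phi(\cdot,p)\to-(u+1)^2$ forces $u^\ast(p)\to\infty$ while $\delta_1\to\infty$ as $u\to\infty$, one gets $t(p)\uparrow\infty$ as $p\uparrow2$, as stated.

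The two hard points are the global negativity for part~(ii) and the uniqueness of the sign change for part~(i). For $p\ge2$ I would prove $\Phi(u,p)<0$ for all $u>0$ by splitting at $u=1$: on $u\ge1$ it is elementary, since $(2p-1)u^{p}\ge pu^{2}$ and $pu^{p-1}\ge(p-2)u$ term by term while $(2-p)u^{p+1}\le0$ and the $-1$ only helps; the delicate regime is $0<u<1$, where the constant $-1$ must dominate several fractional‑power terms and I anticipate needing a tailored weighted AM–GM or tangent‑line estimate rather than a crude bound (the naive $u^2\le u$ loses too much). For part~(i) the coefficient sequence of $\Phi(\cdot,p)$ (ordered by exponent) shows three sign changes for $1<p<2$, so Descartes/Pólya only bounds the number of positive roots by three; proving there is \emph{exactly one} transition requires an extra monotonicity argument, for instance that $\rho_1-\rho_2$, equivalently $\Phi(u(\delta),p)$, is monotone in $\delta$. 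I expect this uniqueness claim, together with the sub‑unit‑interval estimate in~(ii), to be the main obstacle, since the rest of the argument is structural and reduces cleanly to the single inequality $\Phi(u,p)\lessgtr0$.
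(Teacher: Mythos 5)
First, be aware that the paper itself does not prove this statement: it is posed as a \emph{conjecture}, supported only by numerical experiments, and the authors explicitly remark that settling it is challenging because $\rho_1$ and $\rho_2$ are defined only implicitly. So your proposal is an attempt at an open problem, not something to be checked against an existing proof. The good news is that your structural reduction is correct, and I verified it independently. With $\mu=1$, $c=1-\rho$, $u=c/(1-2c)$, the crossing condition $1/w_2=v_2/\rho$ gives $v_2=\rho^2/(2\rho-1)$, hence $b:=v_2-1=cu$; eliminating $\delta^p$ through $\delta^p=bc\,(b^{p-1}+c^{p-1})/(b+c)$ and implicitly differentiating $cb^p+(c^p-\delta^p)b-c\delta^p=0$ (whose partial derivative in $b$ is positive by Proposition \ref{prop:f_incr}, so only the numerator matters for the sign) does yield $\mathrm{sign}\,h_2'(\rho_1)=\mathrm{sign}\,\Phi(u,p)$ with exactly your $\Phi$. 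Your consistency checks are also right: $\Phi(u,2)=-(u+1)^2$ recovers the mean-variance result of Giannakopoulos et al., and $\Phi(u,1)=2(u^2-u-1)$, whose positive root $u=(1+\sqrt5)/2$ maps back to $\delta=2(\sqrt5-2)$, the mean-MAD threshold proved in Section \ref{sec:applications}. This reformulation of the conjecture as the sign of one explicit one-variable function is genuine progress beyond what the paper offers.

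However, as a proof the proposal is incomplete, and the two missing steps are exactly the substance of the conjecture; to your credit you flag both. (a) For part (ii) you only handle $u\ge1$ (that half is fine: for $p\ge 2$ one has $(2-p)u^{p+1}\le 0$, $(2p-1)u^p\ge pu^2$ and $pu^{p-1}\ge(p-2)u$, so $\Phi\le -1$); on $0<u<1$ with $p>2$ nothing is proved, and as you yourself note the naive estimates lose too much, so part (ii) remains open. (b) For part (i) the intermediate value argument gives at least one sign change of $\Phi(\cdot,p)$, which shows both regimes occur ($\rho_1$ optimal for small $\delta$, $\rho_2$ for large $\delta$) --- but the conjecture asserts a \emph{single} threshold $t(p)$, which requires that $\Phi(\cdot,p)$ changes sign exactly once on $(0,\infty)$; the generalized Descartes bound you invoke allows up to three positive roots, so uniqueness is unproved for $1<p<2$ (for $p=1$ it is trivial since $\Phi$ is quadratic). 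A smaller unargued step: converting a unique root in $u$ into a unique threshold in $\delta$ requires that the crossing parameter $u$ is strictly increasing in $\delta$ (equivalently, that your $\delta_1(c)$ is strictly increasing); you assert this via ``bijections,'' but it needs justification --- it does follow from Proposition \ref{prop:f_incr} again, since for fixed $\rho$ the solution $v_2$ of the dispersion equation increases with $s$, so the increasing curve $v_2(\cdot)$ shifts up and its crossing with the decreasing curve $\rho^2/(2\rho-1)$ moves left. In summary: a correct and valuable reduction with partial results going beyond the paper's numerics, but the conjecture is not settled by this proposal.
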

Settling the conjecture is challenging, because
$\rho_1$ and $\rho_2$ are defined as implicit functions.

\section{Conclusion and future research}\label{conclus}

We have introduced a new method for obtain distribution-free bounds for a class of expectation operators. The method consists of solving non-linear optimization problems by a reduction techniques that reduces the set of all candidate solutions to two-point distributions. This presents a transparent and tractable alternative for more general methods that use primal-dual techniques. 

The two-point bounds were then applied to the  newsvendor model and monopoly pricing, both settings that involve decision-making under uncertainty. The newsvendor model focuses on inventory decisions, while monopoly pricing focuses on pricing decisions. The common thread when applying DRO is the application of minimax or maximin principles to make decisions that balance costs and revenues in the face of uncertainty. \blue{We contribute to  DRO by considering ambiguity sets that restict to distributions with relatively lighter tails, or by allowing for distributions with heavier tails.} Indeed, when one wants to consider among all possible scenarios also extreme scenarios, the ambiguity set should be designed to include distributions with \blue{fairly extreme dispersion}. The sets in this paper encompass a range of possible tail behaviors, by conditioning on the dispersion measure, capturing the uncertainty about extreme events. We have shown how such \blue{extreme scenarios} influence the robust decisions of the newsvendor and the monopolist. 

Apart from settling Conjecture~\ref{con:ratio}, it seems worthwhile to broaden the scope of application of our novel method by finding more cases that fit in the general optimization problem \eqref{richclass}.

\paragraph{Acknowledgements} This work was been supported by the NWO Vici grant 202.068 of Johan S.H.~van Leeuwaarden.




  \bibliographystyle{plain} 
  \bibliography{references}





\end{document}